\newtheorem{theorem}{Theorem}[section]
\newtheorem{proposition}[theorem]{Proposition}
\newtheorem{definition}[theorem]{Definition}
\newtheorem{lemma}[theorem]{Lemma}
\newtheorem{remark}[theorem]{Remark}
\newtheorem{corollary}[theorem]{Corollary}
\begin{document}
	
	\title{\textbf{Constant sign solution for simply supported beam equation with non-homogeneous boundary conditions.\footnote{Partially supported by Ministerio de Economía y Competitividad, Spain and FEDER, project MTM2013-43014-P.} }}
	\date{}
	\author{Alberto Cabada   \;and  Lorena Saavedra\footnote{Supported by FPU scholarship, Ministerio de Educaci\'on, Cultura y Deporte, Spain.}\\Departamento de Análise Matemática,\\ Facultade de  Matemáticas,\\Universidade Santiago de Compostela,\\ Santiago de Compostela, Galicia,
		Spain\\
		alberto.cabada@usc.es, lorena.saavedra@usc.es
	} 
	\maketitle
	\begin{abstract}
	The aim of this paper is to study the following fourth-order operator:
	\begin{equation*}
	T[p,c]\,u(t)\equiv u^{(4)}(t)-p\,u''(t)+c(t)\,u(t)\,,\quad t\in I\equiv [a,b]\,,
	\end{equation*}	
	coupled with the non-homogeneous simply supported beam boundary conditions:
		\begin{equation*}
		u(a)=u(b)=0\,,\quad u''(a)=d_1\leq0\,,\ u''(b)=d_2\leq 0\,.	
		\end{equation*}
		
		First, we prove a result which makes an equivalence between the strongly inverse positive (negative) character of this operator with the previously introduced boundary conditions and with the homogeneous boundary conditions, given by:
		\begin{eqnarray}
		\nonumber T[p,c]\,u(t)=h(t)(\geq0)\,,\\\nonumber
		u(a)=u(b)=u''(a)=u''(b)=0\,,
		\end{eqnarray}

		 Once that we have done that, we prove several results where the strongly inverse positive (negative) character of $T[p,c]$ it is ensured.
		 
		 Finally, there are shown  a couple of result which say that under the hypothesis that $h>0$, we can affirm that the problem for the homogeneous boundary conditions has a unique constant sign solution.		
		\end{abstract}
		
		\section{Introduction}
		The study of different fourth order differential equations coupled with the boundary conditions:
			\begin{equation}\label{Ec::cf}
			u(a)=u(b)=u''(a)=u''(b)=0\,,
			\end{equation}
			has been wider treated along the literature.
			
			For instance, in  \cite{Dra} and \cite{Dra1}, there are obtained sufficient conditions which ensure that the problem
			\begin{equation}\label{Ec::p0}
			u^{(4)}(t)+c(t)\,u(t)=h(t)(\geq 0)\,,
			\end{equation} 
			coupled with the homogeneous boundary conditions \eqref{Ec::cf} has a unique constant sign solution on the interval $[0,1]$. Both papers improve previous results obtained in  \cite{cacisa} and \cite{Sch} for the homogeneous case. We note that in \cite{cacisa},  different non-homogeneous boundary conditions, on the line of \eqref{Ec::cfnh}, are considered.
			
			
			In \cite{CaSaa2}, the strongly inverse positive (negative) character of the operator $u^{(4)}(t)+p_1(t)\,u^{(3)}(t)+p_2(t)\,u''(t)+M\, u(t)$ coupled with  boundary conditions \eqref{Ec::cf}, where $p_1\in C^{3}(I)$ and $p_2\in C^2(I)$, is determined by the spectrum of suitable related boundary conditions.
			
			The study of this kind of problems is very important, since they are used to model  different kind of bridges. In \cite{Dra2}, there are shown several examples of bridges and its mathematical models. Even though most of them there are non- linear problems, in order to study them, it is very important to know first the linear part of them. In particular the fact that the displacement of the bridge occurs in the same direction as the external force is fundamental in order to ensure the stability of the considered structure.
			
			In \cite{cacisa, Liu}  the existence of one or multiple positive solutions of some suitable non-linear problems are considered. The used tools are strongly involved with the constant sign of the related Green's function.
 
 At first, in Section 3, we  obtain a result which proves that  $T[p,c]$ is strongly inverse positive (negative) in $X$ (defined beloww and correspondent to the homogeneous boundary conditions \eqref{Ec::cf}) if, and only if, it is also strongly inverse positive (negative) on the following space (which corresponds to the non-homogeneous boundary conditions):
 \begin{equation}\label{Ec::Xt}\tilde X =\{u\in C^4 (I)\ \mid \ u(a)=u(b)=0\,,\ u''(a)\leq 0\,,\ u''(b)\leq 0\}\,.\end{equation}

Then,  we study the following fourth-order operator:
\begin{equation}\label{Ec::T4}
T[p,c]\,u(t)\equiv u^{(4)}(t)-p\,u''(t)+c(t)\,u(t)\,,\quad t\in I\,,
\end{equation}
	on the following space of definition:
	\begin{equation}\label{Ec::esp}
	X=\left\lbrace u\in C^4(I)\quad\mid\quad u(a)=u(b)=u''(a)=u''(b)=0\right\rbrace \,,
	\end{equation}
	which corresponds to the homogeneous boundary conditions \eqref{Ec::cf}.
	
	Here $p\in\mathbb{R}$ and $p\geq 0$.
	
	Realize that problem \eqref{Ec::p0}, studied in \cite{Dra, Dra1}, is a particular case of \eqref{Ec::T4} with $p=0$.

	In Section 4, we formulate the variational approach of problem 
	\begin{equation}
	\label{Ec::Op}
	T[p,c]\,u(t)=h(t), \quad t\in I,
	\end{equation} coupled with the boundary conditions given in \eqref{Ec::cf} and we obtain different previous results which will be used along the paper.
	
	In section 5, it will be obtained sufficient conditions to ensure that the problem \eqref{Ec::Op}, \eqref{Ec::cf} has a unique solution. Moreover, we will verify that this property also warrants a unique solution of problem \eqref{Ec::Op}, with the non-homogeneous boundary conditions:	
	\begin{equation}
	\label{Ec::cfnh}
	u(a)=u(b)=0\,,\quad u''(a)=d_1\leq 0\,,\quad u''(b)=d_2\leq 0\,,
	\end{equation}
	with $d_1$ and $d_2$ arbitrary non positive constants.
	
	In fact, section 5 is devoted to obtain sufficient conditions that warrant that the operator $T[p,c]$ is either strongly inverse positive in $\tilde X$ or strongly inverse negative in $\tilde X$.
	
	Finally, in Section 6, we obtain different conditions for functions $h>0$ and $c$ that ensure that the unique solution of the problem \eqref{Ec::Op}, \eqref{Ec::cf} is either positive or negative.

\section{Preliminaries}
In this section, we introduce several tools and results which are going to be used along the paper.
 
 We consider a  general $n^{\rm th}-$ order linear operator 
  \begin{equation}\label{Op::n}
 L_n[M]\,u(t)\equiv u^{(n)}(t)+p_1(t)\,u^{(n-1)}(t)+\cdots+ p_{n-1}(t)\,u'(t)+(p_n(t)+M)\,u(t)\,,
 \end{equation} 
 with $t \in I$ and $p_k\in C^{n-k}(I)$, $k=1, \ldots,n$.
 
\begin{definition}
	The $n^{\rm th}-$ order linear differential equation 
	\begin{equation}\label{Ec::n}
	L_n[M]\,u(t)=0\,,\ t\in I,
	\end{equation}
	is said to be disconjugate on $I$ if every non trivial solution has less than $n$ zeros at $I$, multiple zeros being counted according to their multiplicity.
\end{definition}

	We introduce a definition to our particular problem \eqref{Ec::T4} in the space $X$.
	\begin{definition}
		The operator $T[p,c]$ is said to be strongly inverse positive (strongly inverse negative) in $X$, if every function $u\in X$ such that $T[p,c]\,u\gneqq0$ in $I$, satisfies $u>0$ ($u<0$) on $(a,b)$ and, moreover $u'(a)>0$ and $u'(b)<0$ ($u'(a)<0$ and $u'(b)>0$).
	\end{definition}
	
	Let us denote $g_{p,c}$ the related Green's function to operator $T[p,c]$ in $X$. Next result, proved in \cite{CaSaa2}, shows a relationship between the Green's function's sign and the previous definition.
	
	\begin{theorem}\label{T::14}
		Green's function related to  operator $T[M]$ in $X$ is positive (negative) a.e on $(a,b)\times (a,b)$ and, moreover, $\frac{\partial }{\partial t}g_{p,c}(t,s)_{\mid t=a}>0$ and $\frac{\partial }{\partial t}g_{p,c}(t,s)_{\mid t=b}<0$ ($\frac{\partial }{\partial t}g_{p,c}(t,s)_{\mid t=a}<0$ and $\frac{\partial }{\partial t}g_{p,c}(t,s)_{\mid t=b}>0$) a.e. on $(a,b)$, if, and only if,   operator $T[M]$  is strongly inverse positive (strongly inverse negative) in $X$.
	\end{theorem}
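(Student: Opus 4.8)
The plan is to prove the equivalence by establishing the two implications separately, and I will focus on the positive case, the negative one being entirely analogous. Throughout I use that, under the standing hypotheses, the homogeneous problem associated with \eqref{Ec::cf} has only the trivial solution, so that $g_{p,c}$ exists and the unique solution of $T[p,c]\,u=h$ in $X$ admits the representation $u(t)=\int_a^b g_{p,c}(t,s)\,h(s)\,ds$.

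For the direction asserting that the sign properties of $g_{p,c}$ imply strong inverse positivity, I would argue directly from this representation. Given $u\in X$ with $h:=T[p,c]\,u\gneqq0$, and since $g_{p,c}(t,s)>0$ for a.e. $(t,s)$ while $h\geq0$ with $h\not\equiv0$, for each fixed $t\in(a,b)$ the integrand $g_{p,c}(t,\cdot)\,h$ is nonnegative and strictly positive on a set of positive measure, whence $u(t)>0$ on $(a,b)$. Differentiating under the integral sign, which is legitimate because $g_{p,c}(\cdot,s)$ is of class $C^1$ off the diagonal and the kernel together with its $t$-derivative is integrable in $s$, gives $u'(a)=\int_a^b \partial_t g_{p,c}(t,s)\big|_{t=a}\,h(s)\,ds$ and the analogous identity at $b$. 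The hypotheses $\partial_t g_{p,c}(a,s)>0$ and $\partial_t g_{p,c}(b,s)<0$ a.e. then force $u'(a)>0$ and $u'(b)<0$, so $T[p,c]$ is strongly inverse positive in $X$.

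For the converse I would fix $s_0\in(a,b)$ and choose a sequence $\{h_n\}\subset C(I)$ of nonnegative functions with $h_n\gneqq0$, $\int_a^b h_n=1$ and supports shrinking to $\{s_0\}$, so that $\{h_n\}$ approximates evaluation at $s_0$. The functions $u_n(t)=\int_a^b g_{p,c}(t,s)\,h_n(s)\,ds$ lie in $X$ and solve $T[p,c]\,u_n=h_n\gneqq0$, so strong inverse positivity yields $u_n>0$ on $(a,b)$, $u_n'(a)>0$ and $u_n'(b)<0$. Letting $n\to\infty$ and using continuity of $g_{p,c}$ and of its boundary $t$-derivatives in the variable $s$, I obtain $g_{p,c}(t,s_0)\geq0$, $\partial_t g_{p,c}(a,s_0)\geq0$ and $\partial_t g_{p,c}(b,s_0)\leq0$.

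The main difficulty is to promote these non-strict inequalities to the strict ones in the statement, since pointwise limits of positive quantities are merely nonnegative. To this end I would use that $v:=g_{p,c}(\cdot,s_0)$ is a \emph{nontrivial} solution of $T[p,c]\,v=0$ on $(a,s_0)\cup(s_0,b)$, nontrivial because $v'''$ jumps by one across $s_0$, and satisfies $v(a)=v(b)=v''(a)=v''(b)=0$. If $v$ had an interior zero $t_1$, then, being nonnegative and of class $C^1$, it would satisfy $v(t_1)=v'(t_1)=0$, a double zero; combined with the homogeneous boundary data and the disconjugacy/uniqueness forced by strong inverse positivity, the bookkeeping of zeros of $v$ on the subinterval where it solves the equation can be pushed to meet or exceed the order $4$, forcing $v\equiv0$ there and hence, after transporting the resulting vanishing Cauchy data across $s_0$, a contradiction with the jump condition. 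A parallel argument at the endpoints excludes $\partial_t g_{p,c}(a,s_0)=0$ and $\partial_t g_{p,c}(b,s_0)=0$. Since $s_0\in(a,b)$ is arbitrary, this delivers the strict signs a.e. I expect this strictness step, in particular the correct counting of zeros that must incorporate the conditions $v''(a)=v''(b)=0$, to be the crux, the approximation and the forward implication being comparatively routine.
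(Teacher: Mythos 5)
A preliminary remark: the paper never proves Theorem \ref{T::14} at all; it is quoted as a result established in \cite{CaSaa2}. So your argument must stand on its own, and as written it does not. The fatal gap is exactly the step you yourself flag as the crux: promoting the non-strict limiting inequalities to the strict a.e.\ signs in the converse implication. The route you indicate would fail for three concrete reasons. First, strong inverse positivity of $T[p,c]$ in $X$ does \emph{not} force disconjugacy of $T[p,c]\,u=0$; this is the whole point of this circle of results. Indeed, by Corollary \ref{Cor::1} the operator with $c\equiv-\lambda_2^p$ is strongly inverse positive in $X$, while the eigenfunction of $T[p,0]$ in $X_1$ associated to $\lambda_2^p$ is a nontrivial solution of $T[p,-\lambda_2^p]\,u=0$ with a zero at $a$ and a triple zero at $b$, i.e.\ four zeros counting multiplicity, so the equation is not disconjugate there. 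Second, even if disconjugacy were available, your count does not close: an interior double zero $t_1\in(a,s_0)$ of $v=g_{p,c}(\cdot,s_0)$ together with the simple zero at $a$ gives multiplicity three on $[a,s_0]$, not four; the conditions $v''(a)=v''(b)=0$ are not zeros of $v$ and are not counted by disconjugacy, and zeros lying on opposite sides of $s_0$ cannot be combined because $v$ does not solve the equation across $s_0$. Third, strong inverse positivity is a statement about functions $u\in X$ of class $C^4$ with $T[p,c]\,u\gneqq 0$; it can never be applied to $v$ itself, since $v\notin C^4(I)$ and $T[p,c]\,v$ is not a nonnegative function --- which is precisely why your limit argument can only ever produce non-strict signs. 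Closing this gap requires a genuinely different mechanism (in the cited source it comes from the spectral apparatus relating sign changes of $g_{p,c}$ and of its boundary derivatives to the eigenvalues of $T[p,0]$ in $X_1$, $X_3$, $U$, $V$), not a refinement of your zero count.

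The direction you call routine also has a gap. From positivity of $g_{p,c}$ a.e.\ on the product $(a,b)\times(a,b)$ you cannot conclude that, for \emph{each fixed} $t$, the function $g_{p,c}(t,\cdot)$ is positive on a set of positive $s$-measure: Fubini yields that only for almost every $t$, and a continuous nonnegative kernel positive a.e.\ on the square may still vanish along an entire slice $\{t_0\}\times(a,b)$. Since strong inverse positivity demands $u(t)>0$ at \emph{every} $t\in(a,b)$, this degeneracy must be excluded, and doing so requires structural information you do not invoke at that point --- the symmetry $g_{p,c}(t,s)=g_{p,c}(s,t)$ and the fact that each slice $g_{p,c}(\cdot,s)$ is a nontrivial (by the jump of the third derivative) solution of the homogeneous equation on $[a,s)$ and $(s,b]$, so that it cannot vanish on a subinterval without vanishing on a whole component. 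Only the boundary-derivative part of this direction is unproblematic, because there the hypothesis is an a.e.\ statement in the single variable $s$.
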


\begin{itemize}
	\item Let $\lambda_1^p>0$ be the least positive eigenvalue of $T[p,0]$ in $X$.
		\item Let  $\lambda_2^p<0$ be the maximum between:
			\begin{itemize}
			
				\item[] ${\lambda_2^p}'<0$, the biggest negative eigenvalue of $T[p,0]$ in 
				\begin{equation*} X_1=\left\lbrace u\in C^4 (I)\quad\mid\quad u(a)=u(b)=u'(b)=u''(b)=0\right\rbrace \,,\end{equation*} 
				\item[] ${\lambda_2^p}''<0$, the biggest negative eigenvalue of $T[p,0]$ in 
				\begin{equation*}X_3=\left\lbrace u\in C^4 (I)\quad\mid\quad u(a)=u'(a)=u''(a)=u(b)=0\right\rbrace \,.\end{equation*} 
			\end{itemize}

		\item Let $\lambda_3^p>0$ be the minimum between:
			\begin{itemize}
				\item[] ${\lambda_3^p}'>0$, the least positive eigenvalue of $T[p,0]$ in\begin{equation*} U=\left\lbrace u\in C^4 (I)\quad\mid\quad u(a)=u'(a)=u(b)=u''(b)=0\right\rbrace \,,\end{equation*}

				\item[] ${\lambda_3^p}''>0$, the least positive eigenvalue of $T[p,0]$ in \begin{equation*}V=\left\lbrace u\in C^4 (I)\quad\mid\quad u(a)=u''(a)=u(b)=u'(b)=0\right\rbrace \,.\end{equation*}
				
			\end{itemize}
		
		\end{itemize}

		\begin{remark}
				In \cite{CaSaa1} it is proved that the second order linear differential equation $u''(t)+m\,u(t)=0$ is disconjugate on $I$ if, and only if, $m\in\left( -\infty,\left( \frac{\pi}{b-a}\right)^2\right)$. In particular: if $p\geq 0$, then  $u''(t)-p\,u(t)=0$ is a disconjugate equation in every real interval $I$.
				
				Hence, under this disconjugacy condition, in \cite{CaSaa2} it is proved the existence of $\lambda_1^p>0$, ${\lambda_2^p}'<0$, ${\lambda_2^p}''<0$, ${\lambda_3^p}'>0$ and ${\lambda_3^p}''>0$. Thus, the previous eigenvalues are well-defined.
		\end{remark}
	
	As a consequence of \cite[Theorem 6.1]{CaSaa2} we can state the following result

	\begin{corollary}\label{Cor::1}
		We consider the operator $T[p,c]\,u(t)\equiv u^{(4)}(t)-p\,u''(t)+c(t)\,u(t)$, where $p\in\mathbb{R}$ and $p\geq 0$. Then,
		\begin{itemize}
			\item If $-\lambda_1^p<c(t)\leq -\lambda_2^p$ for every $t\in I$, then $T[p,c]$ is strongly inverse positive in $X$.
			\item If $-\lambda_3^p\leq c(t)<-\lambda_1^p$ for every $t\in I$, then $T[p,c]$ is strongly inverse negative in $X$.
		\end{itemize}
	\end{corollary}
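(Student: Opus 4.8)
The plan is to reduce the variable-coefficient statement to the constant-coefficient characterization supplied by \cite[Theorem 6.1]{CaSaa2}, which (together with Theorem~\ref{T::14}) tells us that for a \emph{constant} $M$ the operator $T[p,M]$ is strongly inverse positive in $X$ exactly when $M\in(-\lambda_1^p,-\lambda_2^p]$ and strongly inverse negative exactly when $M\in[-\lambda_3^p,-\lambda_1^p)$; in particular the associated Green's function $g_{p,M}$ then has constant sign on $(a,b)\times(a,b)$ together with the appropriate sign of its normal derivatives at $t=a,b$. I would treat the two assertions in parallel and describe the strongly inverse positive case in detail.

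First I would fix the reference constant $M=-\lambda_2^p$, the right endpoint of the admissible interval, for which $T[p,M]$ is strongly inverse positive, so that $g_{p,M}>0$ with $\partial_t g_{p,M}(t,s)|_{t=a}>0$ and $\partial_t g_{p,M}(t,s)|_{t=b}<0$. Given $u\in X$ with $T[p,c]\,u=h\gneqq0$, I would rewrite the equation as $T[p,M]\,u=h+(M-c)\,u$, and since $c(t)\le-\lambda_2^p=M$ the weight $M-c$ is nonnegative. Passing to the integral formulation gives the fixed-point identity
\begin{equation*}
u(t)=\int_a^b g_{p,M}(t,s)\,h(s)\,ds+\int_a^b g_{p,M}(t,s)\,(M-c(s))\,u(s)\,ds=:f(t)+(Ku)(t),
\end{equation*}
where $f>0$ on $(a,b)$ (with the correct boundary derivatives) because $h\gneqq0$ and $g_{p,M}$ is strongly positive, and $K$ is a compact positive integral operator, its kernel being the product of the nonnegative factors $g_{p,M}(t,s)$ and $M-c(s)$.

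The decisive step is to show that the spectral radius $r(K)$ is strictly less than $1$: then $(I-K)^{-1}=\sum_{n\ge0}K^n$ is a positive operator, whence $u=\sum_{n\ge0}K^n f\ge f>0$ on $(a,b)$, while $u'(a)>0$ and $u'(b)<0$ follow from the same estimate near the endpoints together with the boundary behaviour of $g_{p,M}$; this is exactly the strongly inverse positive conclusion. To bound $r(K)$ I would invoke the lower hypothesis $c(t)>-\lambda_1^p$ through a comparison with the constant $\underline c:=\min_{t\in I}c(t)\in(-\lambda_1^p,-\lambda_2^p]$. Because $0\le g_{p,M}(t,s)(M-c(s))\le g_{p,M}(t,s)(M-\underline c)$, monotonicity of the spectral radius for positive kernels gives $r(K)\le r(K_{\underline c})$, where $K_{\underline c}=(M-\underline c)\,G$ and $G$ is the integral operator with kernel $g_{p,M}$. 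For this constant weight one has $r(K_{\underline c})=1$ if and only if $T[p,\underline c]$ admits a function of constant sign in its kernel, i.e. if and only if $\underline c=-\lambda_1^p$ (the principal eigenvalue, whose eigenfunction is of constant sign). Since $\underline c>-\lambda_1^p$ lies strictly on the invertible side of this value, Krein--Rutman yields $r(K_{\underline c})<1$, and therefore $r(K)<1$.

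The strongly inverse negative case is entirely symmetric: one takes the reference constant $M=-\lambda_3^p$, so that $g_{p,M}<0$, and observes that now both $M-c\le0$ and $g_{p,M}\le0$, whence their product is again nonnegative and $K$ is still positive; the bound $r(K)<1$ this time follows by comparison with $\bar c:=\max_{t\in I}c(t)<-\lambda_1^p$, and gives $u\le f<0$. I expect the main obstacle to be the spectral-radius step, and specifically the identification of the threshold $r=1$ with the principal eigenvalue $-\lambda_1^p$ together with the proof that the strict inequality $c>-\lambda_1^p$ propagates to the strict bound $r(K)<1$; the compactness of $K$ and the constant-sign (hence Krein--Rutman applicable) structure of $g_{p,M}$ are what make this rigorous. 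A viable alternative, avoiding the operator-theoretic language, is a homotopy argument: deform $c$ to a constant $M_0\in(-\lambda_1^p,-\lambda_2^p]$ through $c_\tau=(1-\tau)M_0+\tau c$, note that $c_\tau$ remains in the admissible interval for every $\tau\in[0,1]$, and use the continuous dependence of $g_{p,c_\tau}$ together with the impossibility of a sign change without passing through a degenerate (non-invertible) problem, which the open endpoint $-\lambda_1^p$ excludes.
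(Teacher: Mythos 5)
Your main argument is correct, and it is genuinely different from what the paper does, because the paper does not prove Corollary \ref{Cor::1} at all: it is stated as an immediate consequence of \cite[Theorem 6.1]{CaSaa2}, whose scope (as described in the paper's own introduction) is the operator with a \emph{constant} zeroth-order coefficient $M$. Your proof supplies exactly the bridge from that constant-coefficient characterization to the variable-coefficient statement. The decomposition $T[p,M]\,u=h+(M-c)\,u$ with $M=-\lambda_2^p$ (resp.\ $M=-\lambda_3^p$), the fixed-point identity $u=f+Ku$ with nonnegative kernel $g_{p,M}(t,s)\,(M-c(s))$, and the Neumann-series conclusion $u\geq f>0$ (resp.\ $u\leq f<0$), with $u'(a)\geq f'(a)>0$ and $u'(b)\leq f'(b)<0$ because $u-f\geq 0$ vanishes at both endpoints, are all sound. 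The spectral-radius step can be made sharper than your phrasing: since $T[p,0]$ in $X$ is self-adjoint with eigenvalues $\lambda_k=k^4\left(\frac{\pi}{b-a}\right)^4+k^2\,p\left(\frac{\pi}{b-a}\right)^2>0$, the spectrum of $G=\left(T[p,0]+M\right)^{-1}$ is $\left\{1/(\lambda_k+M)\right\}$, and Krein--Rutman identifies $r(G)=1/(\lambda_1^p+M)$ when $g_{p,M}>0$, respectively $r(-G)=1/(\lambda_3^p-\lambda_1^p)$ when $g_{p,M}<0$; hence $r(K)\leq (M-\underline{c})/(\lambda_1^p+M)<1$ precisely because $\underline{c}>-\lambda_1^p$, and $r(K)\leq(\bar{c}+\lambda_3^p)/(\lambda_3^p-\lambda_1^p)<1$ precisely because $\bar{c}<-\lambda_1^p$. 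Note also that your decomposition is the same one the paper itself uses later (the iteration $T[p,d]\,u_{n+1}=h-(c-d)\,u_n$ in Theorems \ref{T::4} and \ref{T::5}), so your route is very much in the paper's spirit even though it never applies this technique to the corollary itself.

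One warning: the homotopy argument you offer as an alternative does not work as stated. Its premise---that $g_{p,c_\tau}$ cannot change sign without passing through a non-invertible problem---fails in this setting: for constant $M$ just outside $(-\lambda_1^p,-\lambda_2^p]$ the operator $T[p,M]$ remains invertible (all eigenvalues of $T[p,0]$ in $X$ are positive, while $-M$ stays near $\lambda_2^p<0$), yet the Green's function is no longer of constant sign. The constant-sign property is lost through degeneracy of $g_{p,M}$ (zeros in the interior, or of the normal derivatives at the boundary), not through loss of invertibility; excluding exactly this is the hard content of \cite[Theorem 6.1]{CaSaa2}, which requires the auxiliary eigenvalue problems in $X_1$, $X_3$, $U$ and $V$. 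So keep the fixed-point argument as your proof, and either drop the homotopy remark or rebuild it on that finer eigenvalue analysis.
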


Moreover, in \cite{CaSaa2}, there are obtained the values of $\lambda_1^p$, $\lambda_2^p$ and $\lambda_3^p$. In particular, we have:
	
	The eigenvalues of the operator $T[p,0]$ in $X$ are given by $\lambda_k=k^4\,\left( \frac{\pi}{b-a}\right) ^4+k^2\,p\,\left( \frac{\pi}{b-a}\right)^2$, where $k\in\{1,2,3,\dots\}$.
	
	 Obviously, the least positive eigenvalue is given by $\lambda_1^p=\left( \frac{\pi}{b-a}\right) ^4+p\,\left( \frac{\pi}{b-a}\right) ^2$. Moreover, we denote as  ${\lambda_1^p}'=16\,\left( \frac{\pi}{b-a}\right) ^4+4\,p\,\left( \frac{\pi}{b-a}\right) ^2$ the second positive  eigenvalue of $T[p,0]$ in $X$.
	
	It is clear that if we denote $\lambda$ as an eigenvalue of $T[p,0]$ and its associated eigenfunction as $u\in X_1$, then function $v(t):=u(1-t)$ is an eigenfunction associated to $\lambda$ in $X_3$. As a consequence, the eigenvalues of $T[p,0]$ on the spaces $X_1$ and $X_3$ are the same. So, in the previous definitions $\lambda_2^p={\lambda_2^p}'={\lambda_2^p}''$.
	
	One can verify that such eigenvalues are given as $-\lambda$, where $\lambda$ is a positive solution of
	\[\frac{ \tan \left(\dfrac{b-a}{2}\,\sqrt{2\sqrt{\lambda}-p} \right)}{\sqrt{2\sqrt{\lambda}-p}}=\frac{ \tanh \left(\dfrac{b-a}{2}\,\sqrt{2\sqrt{\lambda}+p} \right)}{\sqrt{2\sqrt{\lambda}+p}}\,,\]
	in particular, $\lambda_2^p$ is the opposite of the least positive solution of this equation.
	
	Similarly, the eigenvalues of $T[p,0]$ in $U$ and $V$ coincide and we conclude that $\lambda_3^p={\lambda_3^p}'={\lambda_3^p}''$. 
	
	In particular, the eigenvalues are given as the positive solutions of the following equality:
	\[\dfrac{\tan\left( \frac{(b-a)\,\sqrt{\sqrt{p^2+4\,\lambda}-p}}{\sqrt{2}}\right) }{\sqrt{\sqrt{p^2+4\,\lambda}-p}}=\dfrac{\tanh\left( \frac{(b-a)\,\sqrt{\sqrt{p^2+4\,\lambda}+p}}{\sqrt{2}}\right) }{\sqrt{\sqrt{p^2+4\,\lambda}+p}}\,,\]
and $\lambda_3^p$ is the least positive solution of this equation.
	
	\section{Relation between strongly inverse positive (negative) character of $T[p,c]$ in $X$ and $\tilde X$}
	
	In this section, we are going to stablish a relation between the strongly inverse positive (negative) character of the operator $T[p,c]$ on the set $\tilde X$, defined in \eqref{Ec::Xt}, and the strongly inverse positive (negative) character of $T[p,c]$ in $X$, defined in \eqref{Ec::esp}.

	First we introduce a previous result, which proof follows directly from the uniqueness of the homogeneous problem \eqref{Ec::Op},\eqref{Ec::cf}.
	
	\begin{lemma}\label{L::14}
		If the problem \eqref{Ec::Op},\eqref{Ec::cf} has only the trivial solution for $h\equiv 0$. Then \eqref{Ec::Op},\eqref{Ec::cfnh} has a unique solution given by:
		\begin{equation}
		\label{Ec::sol}
		u(t)=\int_a^bg_{p,c}(t,s)\,h(s)\,ds+d_1\,y_{p,c}^a(t)+d_2\,y_{p,c}^b(t), \quad t\in I,
		\end{equation}
		where $g_{p,c}(t,s)$ is the related Green's function of $T[p,c]$ in $X$ and:
		\begin{itemize}
			\item[] $y_{p,c}^a$ is defined as the unique solution of
			\begin{equation}
			\label{Ec::ya}\left\lbrace \begin{array}{c}
			T[p,c]\,u(t)=0\,,\quad t\in I,\\\\
			u''(a)=1\,,\quad u(a)=u(b)=u''(b)=0\,,\end{array} \right. 
			\end{equation}
			\item[] and $y_{p,c}^b$ is defined as the unique solution of
			\begin{equation}
			\label{Ec::yb}\left\lbrace \begin{array}{c}
			T[p,c]\,u(t)=0\,,\quad t\in I,\\\\
			u''(b)=1\,,\quad u(a)=u(b)=u''(a)=0\,.\end{array} \right. 
			\end{equation}	
		\end{itemize}
	\end{lemma}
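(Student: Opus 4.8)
The plan is to proceed by a direct superposition argument, exploiting the linearity of $T[p,c]$ together with the defining properties of the Green's function and of the auxiliary functions $y_{p,c}^a$ and $y_{p,c}^b$.

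First I would record that the hypothesis — that \eqref{Ec::Op},\eqref{Ec::cf} admits only the trivial solution when $h\equiv 0$ — guarantees that all three objects appearing in \eqref{Ec::sol} are well-defined. Indeed, the solutions of $T[p,c]\,u=0$ form a four-dimensional space, and the map sending a solution $v$ to the vector $(v(a),v(b),v''(a),v''(b))\in\R^4$ is linear; the assumption says precisely that its kernel is trivial, hence it is an isomorphism. This yields simultaneously the existence of the Green's function $g_{p,c}$ in $X$ and the existence and uniqueness of $y_{p,c}^a$ and $y_{p,c}^b$, since these are nothing but the solutions of $T[p,c]\,u=0$ corresponding to the boundary data $(0,0,1,0)$ and $(0,0,0,1)$, respectively.

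Next I would verify that the function $u$ given by \eqref{Ec::sol} solves \eqref{Ec::Op},\eqref{Ec::cfnh}. Applying $T[p,c]$ term by term, the Green's function term contributes $h$ by the defining property of $g_{p,c}$, while the two remaining terms vanish because $y_{p,c}^a$ and $y_{p,c}^b$ solve the homogeneous equation; hence $T[p,c]\,u=h$. For the boundary data, the term $\int_a^b g_{p,c}(t,s)\,h(s)\,ds$ lies in $X$ and therefore contributes $0$ to each of $u(a)$, $u(b)$, $u''(a)$, $u''(b)$; reading off the boundary values of $y_{p,c}^a$ from \eqref{Ec::ya} and of $y_{p,c}^b$ from \eqref{Ec::yb}, one obtains $u(a)=u(b)=0$, $u''(a)=d_1$ and $u''(b)=d_2$, exactly as required.

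Finally, uniqueness follows once more from the hypothesis: if $u_1$ and $u_2$ both solve \eqref{Ec::Op},\eqref{Ec::cfnh}, then $w=u_1-u_2$ satisfies $T[p,c]\,w=0$ together with the homogeneous conditions \eqref{Ec::cf}, so $w\equiv 0$ and $u_1=u_2$. None of these steps is genuinely difficult; the only point deserving care is the preliminary observation that the single uniqueness hypothesis at once secures the existence of $g_{p,c}$, $y_{p,c}^a$ and $y_{p,c}^b$, which is precisely what makes the superposition formula \eqref{Ec::sol} meaningful.
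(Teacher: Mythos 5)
Your proof is correct and takes essentially the same route as the paper, which in fact omits the argument entirely, remarking only that it ``follows directly from the uniqueness of the homogeneous problem \eqref{Ec::Op},\eqref{Ec::cf}''; your superposition argument --- including the preliminary observation that the uniqueness hypothesis simultaneously secures the existence of $g_{p,c}$, $y_{p,c}^a$ and $y_{p,c}^b$ --- is exactly that direct argument written out in full. Nothing further is needed.
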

	
	Now, we can prove the following result:
	
	\begin{theorem}\label{T::2.5}
		{\color{white}.}
		\begin{itemize}
			\item $T[p,c]$ is a strongly inverse positive operator in $\tilde X$ if, and only if, it is strongly inverse positive in $ X$.
			\item  $T[p,c]$ is a strongly inverse negative operator in $\tilde X$ if, and only if, it is strongly inverse negative in $X$.		
		\end{itemize}
	\end{theorem}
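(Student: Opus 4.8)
The plan is to prove both ``if and only if'' statements by noting that one implication in each is essentially free and concentrating on the reverse. Since every $u\in X$ satisfies $u''(a)=u''(b)=0\leq0$, we have the inclusion $X\subset\tilde X$; consequently, if $T[p,c]$ has the strongly inverse positive (negative) property on all of $\tilde X$, then in particular it has it on the subset $X$. This settles the implication from $\tilde X$ to $X$ for both items simultaneously, and leaves only the passage from $X$ to $\tilde X$.

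For the converse I would fix $u\in\tilde X$ with $T[p,c]\,u=h\gneqq0$ and set $d_1:=u''(a)\leq0$ and $d_2:=u''(b)\leq0$. Using Lemma \ref{L::14} I would represent $u$ as in \eqref{Ec::sol} and then consider
\[
w:=u-d_1\,y_{p,c}^a-d_2\,y_{p,c}^b=\int_a^b g_{p,c}(\cdot,s)\,h(s)\,ds .
\]
This $w$ lies in $X$: one has $w(a)=w(b)=0$, while $w''(a)=d_1-d_1=0$ and $w''(b)=d_2-d_2=0$, and moreover $T[p,c]\,w=h\gneqq0$. Applying the very definition of strong inverse positivity in $X$ to $w$ gives $w>0$ on $(a,b)$, $w'(a)>0$ and $w'(b)<0$. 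It then remains to control the two correction terms $d_1\,y_{p,c}^a$ and $d_2\,y_{p,c}^b$.

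The crux is therefore the sign of $y_{p,c}^a$ and $y_{p,c}^b$. I would identify these boundary-data solutions with normal derivatives of the Green's function by applying the (formally self-adjoint) Green's identity for $T[p,c]$ to the pair $v(\cdot)=g_{p,c}(\cdot,s)$ and $u=y_{p,c}^a$ from \eqref{Ec::ya}: all boundary contributions collapse except the one carried by $u''(a)=1$, yielding
\[
y_{p,c}^a(s)=-\left.\frac{\partial}{\partial t}g_{p,c}(t,s)\right|_{t=a},\qquad
y_{p,c}^b(s)=\left.\frac{\partial}{\partial t}g_{p,c}(t,s)\right|_{t=b}.
\]
By Theorem \ref{T::14}, strong inverse positivity in $X$ makes both normal derivatives strictly signed, so $y_{p,c}^a<0$ and $y_{p,c}^b<0$ on $(a,b)$; since $d_1,d_2\leq0$, the corrections $d_1\,y_{p,c}^a$ and $d_2\,y_{p,c}^b$ are nonnegative, and added to $w>0$ they give $u>0$ on $(a,b)$. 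For the endpoint slopes I would use that each of $y_{p,c}^a,y_{p,c}^b$ vanishes at $a$ and $b$ and is negative in between, so its one-sided derivatives satisfy $(y_{p,c}^a)'(a)\leq0$, $(y_{p,c}^a)'(b)\geq0$, and likewise for $y_{p,c}^b$; multiplying by $d_1,d_2\leq0$, these contributions share the sign of $w'(a)>0$ and $w'(b)<0$ respectively, whence $u'(a)>0$ and $u'(b)<0$. The strongly inverse negative case is entirely symmetric: the reversed sign of $g_{p,c}$ and of its normal derivatives makes $y_{p,c}^a,y_{p,c}^b>0$, so the corrections again cooperate with $w<0$.

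I expect the main obstacle to be the rigorous justification of the two Green's-function identities: the integration by parts must be carried out across the diagonal singularity $t=s$ of $g_{p,c}$, and one must verify that under the mixed conditions defining \eqref{Ec::ya}--\eqref{Ec::yb} every boundary term vanishes except the one proportional to the prescribed second derivative. Once these identities and the resulting signs of $y_{p,c}^a$ and $y_{p,c}^b$ are established, the remaining assembly is routine.
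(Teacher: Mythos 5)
Your proposal is correct and follows essentially the same route as the paper's proof: the trivial direction from the inclusion $X\subset\tilde X$, the decomposition of $u$ via Lemma \ref{L::14}, and the reduction to the signs of $y_{p,c}^a$ and $y_{p,c}^b$, which both you and the paper settle by identifying these functions with normal derivatives of $g_{p,c}$ at the endpoints and then invoking Theorem \ref{T::14} (your handling of the endpoint slopes $u'(a)>0$, $u'(b)<0$ is in fact spelled out more explicitly than in the paper). The only difference is how that identification is justified: where you propose to derive it by Green's identity across the diagonal singularity --- the technical obstacle you flag --- the paper simply cites \cite[Theorem 6.1]{CaSaa2}, which states that $\frac{\partial}{\partial s}g_{p,c}(t,s)_{\mid s=a}$ and $\frac{\partial}{\partial s}g_{p,c}(t,s)_{\mid s=b}$ solve the boundary value problems \eqref{Ec::ya}--\eqref{Ec::yb} up to sign, and combines this with the symmetry $g_{p,c}(t,s)=g_{p,c}(s,t)$ coming from self-adjointness, thereby sidestepping that computation entirely.
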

	
	\begin{proof}
		Since $X \subset \tilde X$, necessary condition is trivial.
		
		Now, let us see the sufficient one. From the strongly inverse positive (negative) character of $T[p,c]$ in $X$, using Theorem \ref{T::14}, we conclude that $g_{p,c}>0$ ($<0$) a.e. on $I\times I$. Then, we only need to study the sign of $y_{p,c}^a$ and $y_{p,c}^b$. 
		
		In order to do that, we are going to establish a relationship between these functions and some derivatives of $g_{p,c}(t,s)$.
		
		In \cite[Theorem 6.1]{CaSaa2}, it is obtained that  $w(t):=\dfrac{\partial}{\partial s}g_{p,c}(t,s)_{\mid s=a}$  satisfies:
		\begin{eqnarray}
		\nonumber T[p,c]\,w(t)&=&0\,,\quad \forall t\in(a,b]\,,\\\nonumber\\\nonumber
		w(a)=w(b)=w''(b)&=&0\,,\\\nonumber\\\nonumber
		w''(a)&=&-1\,,
		\end{eqnarray}
		thus, we deduce that $y_{p,c}^a(t)=-w(t)$ for all $t\in I$.
		
		Since $T[p,c]$ is a self-adjoint operator, $g_{p,c}(t,s)=g_{p,c}(s,t)$.
		
		Moreover, if $T[p,c]$ is strongly inverse positive (negative) in $X$, from Theorem \ref{T::14} and the symmetry of $g_{p,c}$, we have that $w>0$ ($<0$) a.e. on $I$. So, $y_{p,c}^a<0$ ($>0$) a.e. on $I$.

		Analogously, in \cite[Theorem 6.1]{CaSaa2} it is obtained that  $y(t):=\dfrac{\partial}{\partial s}g_{p,c}(t,s)_{\mid s=b}$ satisfies:
		\begin{eqnarray}
		\nonumber T[p,c]\,y(t)&=&0\,,\quad \forall t\in[a,b)\,,\\\nonumber\\\nonumber
		y(a)=y(b)=y''(a)&=&0\,,\\\nonumber\\\nonumber
		y''(b)&=&1\,.
		\end{eqnarray}
		
		Thus, we deduce that $y_{p,c}^b(t)=y(t)$ for all $t\in I$.
		
		Moreover, if $T[p,c]$ is strongly inverse positive (negative) in $X$, from Theorem \ref{T::14} and the symmetry of $g_{p,c}$, we have that $y<0$ ($>0$) a. e. on $I$. So, $y_{p,c}^b<0$ ($>0$) a.e. on $I$ too.
		
		Hence, the result is proved.
	\end{proof}
	
	So, we have proved that the strongly inverse positive (negative) character of $T[p,c]$ in $X$ and $\tilde X$ are equivalent. So, if we are able to prove that $T[p,c]$ is either strongly inverse positive or strongly inverse negative in one of these two spaces, then such property is also fulfilled  in the other one.
	
	In the sequel, we are going to obtain some sufficient conditions to ensure that $T[p,c]$ is strongly inverse positive (negative) in $X$ and $\tilde X$. From Theorem \ref{T::2}, it is enough to prove it for $X$.
	
	\section{Variational approach}
	
	In this section we are going to obtain the variational approach of problem \eqref{Ec::Op},\eqref{Ec::cf} and some results which will be used on our main results.
	
	 First, we consider the Hilbert space $H:=H^2(I)\cap H_0^1(I)$, where:
	 \[H^2(I)=\{u\in  L^2(I)\ \mid u'\,,u''\in  L^2(I)\}\,,\]
	 	and\[H_0^1(I)=\{u\in  L^2(I)\ \mid u'\in  L^2(I)\,,\ u(a)=u(b)=0\}\,.\]
	 
	  We say that $u\in H$ is a weak solution of \eqref{Ec::Op},\eqref{Ec::cf} if it satisfies
{\footnotesize	\begin{equation}\label{Ec::fvar}\int_a^bu''(t)\,v''(t)\,dt+p\int_a^bu'(t)\,v'(t)\,dt+\int_a^bc(t)\,u(t)\,v(t)\,dt=\int_a^bh(t)\,v(t)\,dt\,,\quad \forall v\in H\,.\end{equation}}

	For a function $f\in C(I)$. Let us denote
	\[f_m:=\min_{t\in I}f(t)\quad\quad\text{and}\quad\quad f^m:=\max_{t\in I}f(t)\,,\]
	and
	\[f^{\pm}(t)=\max\left\lbrace 0,\pm f(t)\right\rbrace\,,\ t\in I. \]
	
	If $p=0$ and $a=0$, $b=1$, we have the following result, see \cite{Usm, Yang}.
	
	\begin{proposition}\label{P::Usm}
		Let $c(t)\neq -k^4\,\pi ^4$ for any $k\in\mathbb{N}$ and all $t\in[0,1]$. Let $p=0$, $a=0$ and $b=1$, then the problem \eqref{Ec::Op},\eqref{Ec::cf} has a unique solution $u\in X$. Moreover, if $-\pi^4<c_m<0$, then
		\[\left\| u\right\|_{C([0,1])}\leq \dfrac{\pi}{2\,(\pi^4+c_m)}\,\left\| h\right\|_{C([0,1])}\,.\]
	\end{proposition}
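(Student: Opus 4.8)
The plan is to treat the two assertions separately: first the existence and uniqueness of a solution $u\in X$, and then the quantitative bound under the extra hypothesis $-\pi^4<c_m<0$. Throughout I will use the variational formulation \eqref{Ec::fvar} with $p=0$, namely $\int_a^b u''v''+\int_a^b c\,u\,v=\int_a^b h\,v$ for all $v\in H$, taking advantage of the fact that every $u\in X$ expands in a sine series $u(t)=\sum_{k\ge1}a_k\sin(k\pi t)$ on $[0,1]$, from which I read off $\|u''\|_{L^2}^2\ge\pi^2\|u'\|_{L^2}^2$ and $\|u''\|_{L^2}^2\ge\pi^4\|u\|_{L^2}^2$.

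For existence and uniqueness I would argue as follows. Since $c$ is continuous on the connected interval $[0,1]$ and never meets the discrete set $\{-k^4\pi^4:k\in\N\}$ (the negatives of the eigenvalues of $T[0,0]$ in $X$), its range is an interval lying inside a single spectral gap; invertibility of $T[0,c]$ then follows from the Fredholm alternative together with a continuation argument deforming $c$ to a constant inside that gap, which is the content of \cite{Usm,Yang}. In the regime relevant to the estimate, $c_m>-\pi^4$, one can proceed more directly: the bilinear form $B(u,v)=\int_0^1u''v''+\int_0^1 c\,uv$ is bounded and, by $\|u''\|_{L^2}^2\ge\pi^4\|u\|_{L^2}^2$, coercive, since $B(u,u)\ge(1+c_m/\pi^4)\|u''\|_{L^2}^2$. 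Lax--Milgram then yields a unique weak solution, and continuity of $h$ upgrades it to a classical solution $u\in X$ by bootstrapping $u^{(4)}=h-c\,u$.

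For the estimate the key is an energy argument. Testing the weak formulation with $v=u$ gives $\|u''\|_{L^2}^2+\int_0^1 c\,u^2=\int_0^1 h\,u$. I would bound the left side below using $\int_0^1 c\,u^2\ge c_m\|u\|_{L^2}^2\ge (c_m/\pi^4)\|u''\|_{L^2}^2$, where the last inequality reverses because $c_m<0$, to obtain $\int_0^1 h\,u\ge\frac{\pi^4+c_m}{\pi^4}\|u''\|_{L^2}^2$. On the other hand, Cauchy--Schwarz together with $\|u\|_{L^2}\le\pi^{-2}\|u''\|_{L^2}$ yields $\int_0^1 h\,u\le\|h\|_{L^2}\|u\|_{L^2}\le\pi^{-2}\|h\|_{L^2}\|u''\|_{L^2}$. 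Comparing the two bounds and cancelling one factor $\|u''\|_{L^2}$ gives $\|u''\|_{L^2}\le\frac{\pi^2}{\pi^4+c_m}\|h\|_{L^2}$.

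Finally I would pass from this $L^2$ bound on $u''$ to a uniform bound on $u$ by the elementary estimate $\|u\|_{C([0,1])}\le\tfrac12\|u'\|_{L^1}$, which follows from $2u(t)=\int_0^t u'(s)\,ds-\int_t^1 u'(s)\,ds$ together with $u(0)=u(1)=0$; then $\tfrac12\|u'\|_{L^1}\le\tfrac12\|u'\|_{L^2}\le\frac{1}{2\pi}\|u''\|_{L^2}$, where the last step uses $\int_0^1 u'=u(1)-u(0)=0$ and the mean-zero Poincar\'e inequality $\|u'\|_{L^2}\le\pi^{-1}\|u''\|_{L^2}$. Chaining this with the previous step and with $\|h\|_{L^2}\le\|h\|_{C([0,1])}$ produces exactly $\|u\|_{C([0,1])}\le\frac{\pi}{2(\pi^4+c_m)}\|h\|_{C([0,1])}$. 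I expect the main obstacle to be the solvability for genuinely non-constant $c$ under only the pointwise condition $c(t)\neq-k^4\pi^4$ (which is where the connectedness-of-range/continuation argument, or the cited results, are indispensable); the estimate itself is a routine energy computation, provided the spectral constants are calibrated at their sharp values, since any slack in $\|u\|_{L^2}\le\pi^{-2}\|u''\|_{L^2}$, in $\|u'\|_{L^2}\le\pi^{-1}\|u''\|_{L^2}$, or in the constant $\tfrac12$ of the final embedding would spoil the precise factor $\pi/2$.
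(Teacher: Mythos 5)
Your proof is correct and follows essentially the same route as the paper's (the paper proves the general version, Proposition \ref{P::Usm1}, and specializes): unique solvability from the spectral-gap condition (deferred, just as in the paper, to \cite{Usm,Yang} and standard Fredholm/monotonicity arguments), and the bound by testing the equation with $u$ and using the sharp inequalities $\left\| u\right\|_{C([0,1])}\le\tfrac12\left\| u'\right\|_{L^2}$, $\left\| u'\right\|_{L^2}\le\pi^{-1}\left\| u''\right\|_{L^2}$, $\left\| u\right\|_{L^2}\le\pi^{-2}\left\| u''\right\|_{L^2}$. The only cosmetic difference is bookkeeping: you solve the energy inequality for $\left\| u''\right\|_{L^2}$ and then descend to $\left\| u\right\|_{C([0,1])}$, whereas the paper eliminates $\left\| u''\right\|_{L^2}$ at the outset and solves for $\left\| u'\right\|_{L^2}$; both yield the identical constant $\pi/\bigl(2(\pi^4+c_m)\bigr)$.
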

	
	Now, we are going to enunciate an equivalent result to this Proposition, which refers to our case.
	
	\begin{proposition}\label{P::Usm1}
		
				Let $c(t)\neq -k^4\,\left( \frac{\pi}{b-a}\right)  ^4-k^2\,p\,\left( \frac{\pi}{b-a}\right)^2 $ for any $k\in\{1,2,3,\dots\}$ and all $t\in I$. Then the problem \eqref{Ec::Op},\eqref{Ec::cf} has a unique solution $u\in X$.
				
				 Moreover, if $-\left( \frac{\pi}{b-a}\right)  ^4-p\,\left( \frac{\pi}{b-a}\right)^2<c_m<0$, then
				\[\left\| u\right\|_{C([0,1])}\leq \dfrac{\pi}{2\,\left( \left( \frac{\pi}{b-a}\right)  ^4+p\,\left( \frac{\pi}{b-a}\right)^2+c_m\right) }\,\left\| h\right\|_{C([0,1])}\,.\]

	\end{proposition}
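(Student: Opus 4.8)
The plan is to reduce the statement to the already-known situation of Proposition \ref{P::Usm} by rescaling the interval, the only genuinely new ingredient being the term $-p\,u''$, which, as we shall see, does not affect the relevant eigenfunctions. Concretely, the affine change of variable $t=a+(b-a)\tau$, $\tilde u(\tau)=u(a+(b-a)\tau)$, transforms \eqref{Ec::Op},\eqref{Ec::cf} into
\[\tilde u^{(4)}(\tau)-p\,(b-a)^2\,\tilde u''(\tau)+(b-a)^4\,\tilde c(\tau)\,\tilde u(\tau)=(b-a)^4\,\tilde h(\tau)\,,\quad\tau\in[0,1]\,,\]
with the same boundary conditions \eqref{Ec::cf}, i.e. an operator of the same family with nonnegative parameter $\hat p=p\,(b-a)^2$ and coefficient $\hat c=(b-a)^4\,\tilde c$. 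Its resonant values $-(k^4\pi^4+\hat p\,k^2\pi^2)$ correspond exactly, after scaling, to $-\lambda_k=-k^4\big(\frac{\pi}{b-a}\big)^4-k^2\,p\,\big(\frac{\pi}{b-a}\big)^2$, so the hypothesis on $c$ is precisely the non-resonance condition on $\hat c$.

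For existence and uniqueness I would argue by the Fredholm alternative. Since every eigenvalue $\lambda_k$ of $T[p,0]$ in $X$ is strictly positive, $T[p,0]$ is invertible and $T[p,c]\,u=h$ is equivalent to $(I+\mathcal K)u=T[p,0]^{-1}h$, where $\mathcal K u(t)=\int_a^b g_{p,0}(t,s)\,c(s)\,u(s)\,ds$ is compact on $C(I)$; hence the problem has a unique solution if and only if the homogeneous problem $T[p,c]\,u=0$, $u\in X$, admits only $u\equiv0$. The eigenfunctions of $T[p,0]$ in $X$ are $\phi_k(t)=\sin\!\big(k\pi\frac{t-a}{b-a}\big)$, independent of $p$ because $\phi_k''=-\big(\frac{k\pi}{b-a}\big)^2\phi_k$, and they form an orthogonal basis of $L^2(I)$. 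Expanding the homogeneous equation in this basis and using $c(t)\neq-\lambda_k$ yields $u\equiv0$ exactly as in \cite{Usm,Yang}: the extra term $-p\,u''$ only shifts the eigenvalues from $k^4\big(\frac{\pi}{b-a}\big)^4$ to $\lambda_k$ and leaves the whole argument untouched.

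For the a priori bound I would first extract the decisive denominator from the energy identity. Testing the weak formulation \eqref{Ec::fvar} with $v=u$ gives
\[\int_a^b (u'')^2+p\int_a^b (u')^2+\int_a^b c\,u^2=\int_a^b h\,u\,.\]
The variational characterization of $\lambda_1^p$ as the least eigenvalue of $T[p,0]$ in $X$ (whose form domain is $H$) gives $\int_a^b (u'')^2+p\int_a^b (u')^2\ge\lambda_1^p\int_a^b u^2$, while $\int_a^b c\,u^2\ge c_m\int_a^b u^2$; since $c_m>-\lambda_1^p$ this produces $(\lambda_1^p+c_m)\,\|u\|_{L^2}^2\le\|h\|_{L^2}\|u\|_{L^2}$, that is $\|u\|_{L^2}\le(\lambda_1^p+c_m)^{-1}\|h\|_{L^2}$, which is where the factor $\lambda_1^p+c_m$ originates. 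Upgrading this to the $C(I)$-estimate with the sharp numerator is cleanest on $[0,1]$: one runs the Usmani--Yang argument for the rescaled operator to obtain $\|\tilde u\|_{C([0,1])}\le\frac{\pi}{2(\pi^4+\hat p\,\pi^2+\hat c_m)}\|\hat h\|_{C([0,1])}$ and transforms back, using $\|\tilde u\|_\infty=\|u\|_\infty$, $\hat c_m=(b-a)^4 c_m$, $\hat h=(b-a)^4 h$, and that the common factor $(b-a)^4$ cancels against $\pi^4+\hat p\,\pi^2+\hat c_m=(b-a)^4(\lambda_1^p+c_m)$, returning precisely the stated constant.

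The main obstacle is exactly this last passage from the $L^2$ energy bound to the $C$-norm bound with the explicit constant $\pi/2$: the energy method by itself yields the correct denominator but a constant in the wrong norm, so one must borrow the one-dimensional embedding / Green's-function sup-norm estimate from \cite{Usm,Yang} (equivalently, expand the solution of the constant-coefficient comparison problem $T[\hat p,\hat c_m]\,\Psi=1$ in the sine basis and bound it termwise). Everything else is routine, the simplification throughout being that $-p\,u''$ preserves the eigenfunctions $\sin\!\big(k\pi\frac{t-a}{b-a}\big)$ and merely relabels the spectrum.
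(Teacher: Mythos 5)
Your proposal leaves the decisive step of the proposition unproved. The stated $C$-norm bound with the explicit constant $\pi/2$ is exactly what your argument never establishes: your energy estimate (testing \eqref{Ec::fvar} with $v=u$ and using the Rayleigh quotient) bounds $\left\| u\right\|_{L^2(I)}$, which, as you yourself note, cannot be upgraded to a $C(I)$ bound; and your fallback — ``run the Usmani--Yang argument for the rescaled operator'' — is circular, because the rescaled problem has $\hat p=p\,(b-a)^2>0$, while Proposition \ref{P::Usm} (i.e.\ the results of \cite{Usm,Yang}) covers only $p=0$. Extending that argument to $p>0$ is precisely the content of Proposition \ref{P::Usm1}, so it cannot be ``borrowed''. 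Moreover, your claim that ``the energy method by itself yields the correct denominator but a constant in the wrong norm'' is false, and this is how the paper closes the gap: one runs the energy identity so as to bound $\left\| u'\right\|_{L^2(I)}$ rather than $\left\| u\right\|_{L^2(I)}$. Indeed, by \eqref{Ec::Des1} the left-hand side of the identity is at least $\left( \left( \frac{\pi}{b-a}\right)^2+p\right) \left\| u'\right\|^2_{L^2(I)}$, while the right-hand side is at most $\left\| h\right\|_{C(I)}\sqrt{b-a}\,\frac{b-a}{\pi}\left\| u'\right\|_{L^2(I)}-c_m\left( \frac{b-a}{\pi}\right)^2\left\| u'\right\|^2_{L^2(I)}$ (using $c_m\leq 0$), whence
\[\left\| u'\right\|_{L^2(I)}\leq \frac{\pi}{\sqrt{b-a}}\,\frac{\left\| h\right\|_{C(I)}}{\left( \frac{\pi}{b-a}\right)^4+p\left( \frac{\pi}{b-a}\right)^2+c_m}\,,\]
and then the embedding \eqref{Ec::Des2}, $\left\| u\right\|_{C(I)}\leq \frac{\sqrt{b-a}}{2}\left\| u'\right\|_{L^2(I)}$, gives exactly the stated constant. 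Your alternative suggestion (termwise comparison with the solution of $T[\hat p,\hat c_m]\,\Psi=1$) presupposes a maximum principle relating $u$ and $\Psi$ that is not available at this stage of the paper and would anyway not reproduce the constant $\pi/2$.

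There is a second, smaller but genuine, flaw in the uniqueness part. The mechanism you describe — ``expanding the homogeneous equation in the basis $\phi_k$ and using $c(t)\neq-\lambda_k$ yields $u\equiv 0$'' — is valid only for constant $c$: for variable $c$ the multiplication operator $u\mapsto c\,u$ is not diagonal in the sine basis, so the modes couple and no conclusion follows from pointwise non-resonance alone. What makes the result true (and what the paper's proof uses) is the continuity of $c$: the range of $c$ is an interval, so pointwise avoidance of all eigenvalues forces either $c_m>-\lambda_1^p$ or $-\lambda_{k+1}<c(t)<-\lambda_k$ on all of $I$ for a single $k$, and unique solvability then follows from a genuine spectral-gap argument (for instance, writing $c=\gamma+(c-\gamma)$ with $\gamma$ the midpoint of the gap, so that $\left\| (T[p,0]+\gamma)^{-1}\right\| \sup_{t\in I}\left| c(t)-\gamma\right| <1$). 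Your Fredholm reduction of existence to uniqueness is correct, but the uniqueness itself is asserted through an argument that fails for non-constant $c$.
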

	
	\begin{proof}
		If $c(t)\neq -k^4\,\left( \frac{\pi}{b-a}\right)  ^4-p\,k^2\left( \frac{\pi}{b-a}\right)^2 $ for any $k\in\{1,2,3,\dots\}$ and $t\in I$, it means that, since $c\in C( I)$, either there exist $k\in \{1,2,,3\dots\}$ such that
	{\scriptsize 	$$c(t)\in\left( -(k+1)^4\left( \frac{\pi}{b-a}\right)  ^4-p\,(k+1)^2\left( \frac{\pi}{b-a}\right)^2,-k^4\left( \frac{\pi}{b-a}\right)  ^4-p\,k^2\left( \frac{\pi}{b-a}\right)^2\right) $$} \hspace{-0.13cm}or that $c_m>-\left( \frac{\pi}{b-a}\right)  ^4-p\,\left( \frac{\pi}{b-a}\right)^2$, i.e.  there is no any eigenvalue of $T[0,p]$ between $c_m$ and $c^m$. As a consequence, the existence of a unique solution of problem \eqref{Ec::Op}, \eqref{Ec::cf} is ensured. Now, let us see the boundedness.
		
			We have the two following Wirtinger inequalities for every $u\in H$, (see \cite{PTV, Usm})
			\begin{equation}\label{Ec::Des1}\left\| u\right\| _{ L^2(I)}\leq \dfrac{b-a}{\pi}\left\|u' \right\|_{ L^2(I)}\leq \left( \dfrac{b-a}{\pi}\right) ^2\left\| u''\right\|_{ L^2(I)}\,,\end{equation}
			and
			\begin{equation}\label{Ec::Des2}\left\| u\right\| _{C( I)}\leq \dfrac{\sqrt{b-a}}{2}\left\| u'\right\| _{ L^2(I)}\,.\end{equation}
			
			Now, multiplying  equation \eqref{Ec::Op} by the unique solution $u\in X$ and  integrating, we have
			\[\int_a^bu^{(4)}(t)\,u(t)\,dt-p\,\int_a^bu''(t)\,u(t)\,dt+\int_a^bc(t)\,u^2(t)\,dt=\int_a^bh(t)\,u(t)\,dt\,,\]
			which is equivalent to
			\[\int_a^b{u''}^2(t)\,dt+p\int_a^b{u'}^2(t)\,dt=\int_a^bh(t)\,u(t)\,dt-\int_a^bc(t)\,u^2(t)\,dt\,.\]
			
			Now, taking into account the inequalities \eqref{Ec::Des1} and that $c_m\leq 0$  we have
		{\scriptsize 	\begin{eqnarray*}
			 \left\|u''\right\|^2_{ L^2(I)}+ p\left\| u'\right\|^2_{ L^2(I)} &\geq&\left( \dfrac{\pi}{b-a}\right) ^2 \left\| u'\right\|^2_{ L^2(I)}+p\left\| u'\right\|^2_{ L^2(I)}
			\end{eqnarray*}}
			and
			
			{\scriptsize 	\begin{eqnarray*}
			\int_a^bh(t)\,u(t)\,dt-\int_a^bc(t)\,u^2(t)\,dt&\leq&\left\|h\right\| _{C( I)}   \int_a^b\,\left|u(t)\right|\,dt  -c_m\left\| u\right\|^2_{ L^2(I)} \\\nonumber
			&\leq &\left\|h\right\| _{C( I)} \sqrt{b-a} \left\| u\right\|_{ L^2(I)}  -c_m\left\| u\right\|^2_{ L^2(I)}\\\nonumber
			&\leq& \left\|h\right\| _{C( I)} \sqrt{b-a} \dfrac{b-a}{\pi}\left\| u'\right\|_{ L^2(I)} -c_m\left( \dfrac{b-a}{\pi}\right)^2 \left\| u'\right\|^2_{ L^2(I)}
			\end{eqnarray*}}
		
		So, combining the last two inequalities we arrive to
		
		\[\left( \left( \dfrac{\pi}{b-a}\right) ^2+p+ c_m\left( \dfrac{b-a}{\pi}\right)^2\right) \left\| u'\right\|_{ L^2(I)}\leq \left\|h\right\| _{C( I)} \sqrt{b-a} \dfrac{b-a}{\pi}\,, \]
		which is equivalent to
			\[\left\| u'\right\|_{L_2(I)}\leq \dfrac{\pi}{\sqrt{b-a}}\dfrac{\left\| h\right\|_{C(I)}}{\left( \left( \frac{\pi}{b-a}\right)  ^4+p\,\left( \frac{\pi}{b-a}\right)^2+c_m\right) }\,,\]
			that combined with the inequality \eqref{Ec::Des2} gives our result.
		\end{proof}
		
		\begin{remark}
			We note that previous inequality includes Proposition \ref{P::Usm} as a particular case.
		\end{remark}
		
			For an arbitrary nonnegative continuous function $r(t)\geq 0$ in $ I$, we define the scalar product
			\begin{equation}\label{Ec::pr-es}(u,v)=\int_a^bu''(t)\,v''(t)\,dt+p\int_a^bu'(t)\,v'(t)\,dt+\int_a^br(t)\,u(t)\,v(t)\,dt\,,\quad u\,,v\in H\,,\end{equation}
			and $\left\| u\right\| =(u,u)^{1/2}$ its associated norm.
			
				We have the following inequality:
				\begin{eqnarray}\nonumber\left| u(t)-u(s)\right|& =&\left| \int_s^tu'(r)\,dr\right| \leq \sqrt{t-s}\,\left\| u'\right\| _{L^2(I)}\leq \sqrt{{t-s}}\,\frac{b-a}{\pi}\left\| u''\right\|_{L^2(I)}\\\nonumber &\leq& \sqrt{{t-s}}\,\frac{b-a}{\pi}\left\| u\right\|\,.\end{eqnarray}

					Thus, we can affirm that the embedding of $H$ into $C(I)$ is compact.

		Let $f(t)$ and $h(t)$ be continuous functions on $ I$, following the arguments shown in \cite{Dra}, using  the Riesz Representation  Theorem we can define $S_f\colon H\rightarrow H$ and $h^*\in H$ such that 
			\begin{equation}\label{Ec::Sf-h}(S_fu,v)=\int_a^bf(t)\,u(t)\,v(t)\,dt\,,\quad (h^*,v)=\int_a^b h(t)\,v(t)\,dt\,,\ u,v\in H\,.\end{equation}

			Now, let us introduce some results which make a relation between this norm and the norms $\left\| \cdot\right\| _{C( I)}$ and $\left\| \cdot\right\| _{ L^2(I)}$. Such result generalizes \cite[Lemma 7]{Dra}.
			
			\begin{lemma} \label{L::1}
				Let $u\in H$, $r \in C(I)$, $r\geq 0$ in $I$ and $\|\cdot\|$ be the norm associated to the scalar product \eqref{Ec::pr-es}. Then
				\begin{eqnarray}
				\nonumber\left\| u\right\| _{C( I)}&\leq &\dfrac{1}{\sqrt{\delta_1}}\left\| u\right\| \,,\\\nonumber
				\end{eqnarray}	
				and
				\begin{eqnarray}
			\nonumber	\left\| u\right\| _{ L^2(I)}&\leq & \dfrac{\left\| u\right\| }{\sqrt{\left( \dfrac{\pi}{b-a}\right) ^4+p\,\left( \dfrac{\pi}{b-a}\right) ^2
			+\min_{t\in I}{\{r(t)\}}}}\,,
				\end{eqnarray}
					where \begin{equation}
					\label{Ec::D1}
				\delta_1=\max\left\lbrace \dfrac{4\,p}{b-a},\dfrac{4\,\pi^2}{(b-a)^{3/2}}\right\rbrace\,. \end{equation}

			\end{lemma}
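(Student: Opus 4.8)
The plan is to read both inequalities off directly from the definition of the norm, namely
$\|u\|^2 = \|u''\|_{L^2(I)}^2 + p\,\|u'\|_{L^2(I)}^2 + \int_a^b r(t)\,u^2(t)\,dt$,
exploiting that each of the three summands is nonnegative (because $p\geq 0$ and $r\geq 0$), together with the two Wirtinger inequalities \eqref{Ec::Des1} and the embedding bound \eqref{Ec::Des2} already recorded above. The whole argument is then a matter of bounding one term of $\|u\|^2$ from below (for the $L^2$ estimate) or bounding $\|u'\|_{L^2(I)}$ from above by $\|u\|$ (for the $C(I)$ estimate).

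For the $L^2$-estimate I would bound each of the three terms from below in terms of $\|u\|_{L^2(I)}$. The two chained Wirtinger inequalities give $\|u''\|_{L^2(I)}^2 \geq (\pi/(b-a))^4\,\|u\|_{L^2(I)}^2$ and $\|u'\|_{L^2(I)}^2 \geq (\pi/(b-a))^2\,\|u\|_{L^2(I)}^2$, while $\int_a^b r\,u^2 \geq (\min_{t\in I} r(t))\,\|u\|_{L^2(I)}^2$. Adding these three lower bounds and taking a square root yields exactly the stated estimate, with $(\pi/(b-a))^4 + p\,(\pi/(b-a))^2 + \min_{t\in I} r(t)$ appearing under the radical. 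This half is essentially immediate.

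For the $C(I)$-estimate I would start from \eqref{Ec::Des2}, that is $\|u\|_{C(I)} \leq \frac{\sqrt{b-a}}{2}\,\|u'\|_{L^2(I)}$, and then control $\|u'\|_{L^2(I)}$ by $\|u\|$ in two independent ways. Discarding all but the $p$-term gives $\|u'\|_{L^2(I)}^2 \leq \frac{1}{p}\,\|u\|^2$, whereas discarding all but the $u''$-term and applying the second Wirtinger inequality gives $\|u'\|_{L^2(I)}^2 \leq (\frac{b-a}{\pi})^2\,\|u''\|_{L^2(I)}^2 \leq (\frac{b-a}{\pi})^2\,\|u\|^2$. Each route produces a valid bound $\|u\|_{C(I)} \leq C\,\|u\|$; keeping the smaller constant amounts to choosing $\delta_1$ to be the larger of $\frac{4p}{b-a}$ and $\frac{4\pi^2}{(b-a)^3}$, which is the quantity in \eqref{Ec::D1}.

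The only genuinely delicate point is the degeneracy at $p=0$: there the first of the two routes is useless, so the $C(I)$-bound must come entirely from the $u''$-term, and it is precisely the $\max$ defining $\delta_1$ that makes a single formula valid for all $p\geq 0$. I would also recompute the second slot of $\delta_1$ with care, since the $u''$-route yields $\frac{4\pi^2}{(b-a)^3}$ rather than $\frac{4\pi^2}{(b-a)^{3/2}}$; the two coincide on the normalized interval $b-a=1$ used in \cite{Dra}, so the claimed generalization of \cite[Lemma 7]{Dra} (where the constant is $\frac{1}{2\pi}$) is unaffected, but I would flag the exponent discrepancy.
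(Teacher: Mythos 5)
Your proof is correct and follows essentially the same route as the paper's own: the $C(I)$ bound is obtained by combining \eqref{Ec::Des2} with the two independent ways of dominating $\left\| u'\right\|_{L^2(I)}$ by $\left\| u\right\|$ (the $p$-term directly, and the $u''$-term via \eqref{Ec::Des1}), while the $L^2$ bound comes from adding the three Wirtinger-type lower bounds for the summands of $\left\| u\right\|^2$. Your flag on the exponent is also justified: the paper's own computation ends with $\left\| u\right\|^2_{C(I)}\leq \frac{(b-a)^3}{4\pi^2}\left\| u\right\|^2$, so the second entry of $\delta_1$ should be $\frac{4\pi^2}{(b-a)^{3}}$, and the $(b-a)^{3/2}$ appearing in \eqref{Ec::D1} is a typo in the paper (harmless when $b-a=1$, the setting of \cite{Dra}, but not for general intervals, where the constants later built from $\delta_1$ should be adjusted accordingly).
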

			
			\begin{proof}

			Using the inequalities given in \eqref{Ec::Des1}-\eqref{Ec::Des2}, we have that the two following inequalities are satisfied
				
				\begin{eqnarray}
				\nonumber	p\,\left\| u\right\| ^2_{C( I)}&\leq& \dfrac{b-a}{4}\,p\int_a^b(u'(t))^2\,dt\\\nonumber&\leq& \dfrac{b-a}{4}\left( \int_a^b(u''(t))^2\,dt+p\int_a^b(u'(t))^2\,dt+\int_a^br(t)\,u^2(t)\,dt\right) \\\nonumber&=&\dfrac{b-a}{4}\left\| u\right\| ^2\,,\\\nonumber&&\\\nonumber&&\\\nonumber 	\left\| u\right\| ^2_{C( I)}&\leq& \dfrac{b-a}{4}\int_a^b(u'(t))^2\,dt\leq \dfrac{(b-a)^3}{4\,\pi^2}\int_a^b(u''(t))^2\,dt\\\nonumber&\leq& \dfrac{(b-a)^3}{4\,\pi^2}\left( \int_a^b(u''(t))^2\,dt+p\int_a^b(u'(t))^2\,dt+\int_a^br(t)\,u^2(t)\,dt\right) \\\nonumber&=&\dfrac{(b-a)^3}{4\,\pi^2}\left\| u\right\| ^2\,.
				\end{eqnarray}
				
				So, if $p\neq 0$,
				\[\left\| u\right\| _{C( I)}\leq \min\left\lbrace \sqrt{\dfrac{b-a}{4\,p}},\dfrac{\sqrt{(b-a)^3}}{2\,\pi}\right\rbrace \left\|u\right\|  =\dfrac{1}{\sqrt{\delta_1}} \left\|u\right\| \,,\]				
			moreover, if $p=0$,				
					\[\left\| u\right\| _{C( I)}\leq  \dfrac{\sqrt{(b-a)^3}}{2\,\pi} \left\|u\right\|  =\dfrac{1}{\sqrt{\delta_1}} \left\|u\right\| \,.\]
					
				On another hand,
				\begin{eqnarray}
				\nonumber\left\| u\right\| ^2_{ L^2(I)}&=&\dfrac{\left( \dfrac{\pi}{b-a}\right) ^4+p\,\left( \dfrac{\pi}{b-a}\right) ^2+\min_{t\in I}{\{r(t)\}}}{\left( \dfrac{\pi}{b-a}\right) ^4+p\,\left( \dfrac{\pi}{b-a}\right) ^2+\min_{t\in I}{\{r(t)\}}}\int_a^bu^2(t)\,dt\\\nonumber&&\\\nonumber
				&\leq&  
				\dfrac{\int_a^b(u''(t))^2\,dt+p\,\int_a^b(u'(t))^2\,dt+\int_a^br(t)\,u^2(t)\,dt}{\left( \dfrac{\pi}{b-a}\right) ^4+p\,\left( \dfrac{\pi}{b-a}\right) ^2+\min_{t\in I}{\{r(t)\}}}\\\nonumber&&\\\nonumber
				&=&\dfrac{\left\| u\right\| ^2}{\left( \dfrac{\pi}{b-a}\right) ^4+p\,\left( \dfrac{\pi}{b-a}\right) ^2+\min_{t\in I}{\{r(t)\}}}\,.
				\end{eqnarray}
			\end{proof}

From classical arguments, see \cite{Bre}, we obtain the following result, where we see that a  weak solution of \eqref{Ec::fvar} in $H$ under suitable conditions is indeed a classical solution of \eqref{Ec::Op}-\eqref{Ec::cf} in $X$.

\begin{proposition} \label{P::4.5}
	If $c,h\in C( I)$, then if $u\in H$ is a weak solution of \eqref{Ec::fvar}, then $u$ is a classical solution of \eqref{Ec::Op}-\eqref{Ec::cf} in $X$.
\end{proposition}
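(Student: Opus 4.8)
The plan is to proceed by a standard regularity bootstrap followed by the extraction of the natural boundary conditions, in two clearly separated stages. The point that dictates the order is that one may only integrate by parts against \emph{general} test functions once enough smoothness of $u$ has been established; so interior regularity must come first.

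First I would test \eqref{Ec::fvar} against $v\in C_c^\infty(a,b)$, which indeed lies in $H$ since such $v$ belong to $H^2(I)$ and vanish at the endpoints. Because these $v$ vanish identically near $a$ and $b$, no boundary terms arise and the weak identity reads, in the sense of distributions on $(a,b)$,
\[
u^{(4)}-p\,u''+c\,u=h .
\]
I would rewrite this as $u^{(4)}=h+p\,u''-c\,u$ in $\mathcal D'(a,b)$. Since $u\in H\subset H^2(I)$ we have $u''\in L^2(I)$ and, by the one-dimensional embedding $H^2(I)\hookrightarrow C^1(I)$, both $u$ and $u'$ are continuous; together with $c,h\in C(I)$ this makes the right-hand side $h+p\,u''-c\,u$ belong to $L^2(I)$. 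Thus $u''\in L^2(I)$ has distributional second derivative $(u'')''=u^{(4)}\in L^2(I)$, so $u''\in H^2(I)$, i.e. $u\in H^4(I)$, and the embedding $H^4(I)\hookrightarrow C^3(I)$ gives $u\in C^3(I)$. Then $u''$ is continuous, and re-inserting this into $u^{(4)}=h+p\,u''-c\,u$ shows $u^{(4)}$ is continuous, so $u\in C^4(I)$ and the equation \eqref{Ec::Op} holds pointwise on $I$.

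It remains to recover the four conditions in \eqref{Ec::cf}. Two are immediate: $u\in H=H^2(I)\cap H_0^1(I)$ forces $u(a)=u(b)=0$. For the other two I would now, having $u\in C^4(I)$, integrate \eqref{Ec::fvar} by parts against an \emph{arbitrary} $v\in H$. Using $v(a)=v(b)=0$ to annihilate the terms $[u'''v]_a^b$ and $p\,[u'v]_a^b$, I obtain
\[
\int_a^b u''v''\,dt+p\int_a^b u'v'\,dt=u''(b)\,v'(b)-u''(a)\,v'(a)+\int_a^b\bigl(u^{(4)}-p\,u''\bigr)v\,dt .
\]
Substituting this into \eqref{Ec::fvar} and cancelling the integral terms via the already-established pointwise identity $u^{(4)}-p\,u''+c\,u=h$, the weak formulation collapses to
\[
u''(b)\,v'(b)-u''(a)\,v'(a)=0,\qquad \forall\,v\in H .
\]
Choosing $v\in H$ with $(v'(a),v'(b))=(1,0)$, and then with $(v'(a),v'(b))=(0,1)$ — such functions plainly exist in $H$ — yields $u''(a)=0$ and $u''(b)=0$. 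Hence $u\in X$ and $u$ is a classical solution of \eqref{Ec::Op}-\eqref{Ec::cf}.

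Finally I would stress where the only real subtlety lies. No step is deep, but the ordering is essential: the natural boundary conditions $u''(a)=u''(b)=0$ cannot be read off until $u$ is known to be $C^4$, because the integration by parts producing the boundary terms $u''(b)v'(b)-u''(a)v'(a)$ is legitimate only once $u'''$ and $u''$ are genuine continuous functions rather than mere distributions. Thus the main (though routine) obstacle is the regularity bootstrap of the second paragraph; the boundary-condition extraction is then a short computation.
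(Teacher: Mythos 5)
Your proof is correct: the interior regularity bootstrap (distributional equation from compactly supported test functions, then $u\in H^4(I)\hookrightarrow C^3(I)$, then $u\in C^4(I)$), followed by the integration by parts against arbitrary $v\in H$ to extract the natural boundary conditions $u''(a)=u''(b)=0$, is exactly the standard argument, and your ordering of the two stages is the right one. The paper itself offers no written proof of Proposition \ref{P::4.5}, deferring entirely to ``classical arguments, see \cite{Bre}''; your write-up supplies precisely those classical details, so it matches the paper's intended approach.
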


%
%
%
%

				Next result improves  \cite[Lemma 8]{Dra}
			\begin{lemma}\label{L::2}
				Let $S_f\colon H\rightarrow H$ be the operator previously defined in \eqref{Ec::Sf-h}. Then,
				\[\left\| S_f\right\| \leq \dfrac{1}{\delta_1}\int_a^b\,|f(t)|\,dt\,.\]
			\end{lemma}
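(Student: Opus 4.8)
The plan is to reduce the operator-norm estimate to a pointwise bound on the functions in the defining integral, and then invoke the $C(I)$-estimate already established in Lemma \ref{L::1}. Since $H$ is a Hilbert space and $S_f$ is bounded, I would start from the bilinear characterization of the operator norm,
\[
\|S_f\| = \sup_{\|u\|=1,\ \|v\|=1}\bigl|(S_f u, v)\bigr|,
\]
which follows from $\|S_f u\| = \sup_{\|v\|\le 1}|(S_f u, v)|$ in any Hilbert space. This is convenient because the pairing $(S_f u, v)$ is given explicitly by \eqref{Ec::Sf-h} as $\int_a^b f(t)\,u(t)\,v(t)\,dt$, so the left-hand side $\|S_f\|$ is expressed directly in terms of the data $f$, $u$, $v$ rather than through the less tractable quantity $S_f u$ itself.

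The core estimate is then a one-line chain: for $u,v\in H$,
\[
\bigl|(S_f u, v)\bigr| = \left|\int_a^b f(t)\,u(t)\,v(t)\,dt\right|
\le \int_a^b |f(t)|\,|u(t)|\,|v(t)|\,dt
\le \left(\int_a^b |f(t)|\,dt\right)\|u\|_{C(I)}\,\|v\|_{C(I)},
\]
where the last step simply pulls the suprema $\|u\|_{C(I)}$ and $\|v\|_{C(I)}$ out of the integral. At this point I would apply the first inequality of Lemma \ref{L::1}, namely $\|u\|_{C(I)}\le \delta_1^{-1/2}\|u\|$ and likewise for $v$, to obtain
\[
\bigl|(S_f u, v)\bigr| \le \frac{1}{\delta_1}\left(\int_a^b |f(t)|\,dt\right)\|u\|\,\|v\|.
\]
Taking the supremum over $\|u\|=\|v\|=1$ yields the claimed bound immediately. (Equivalently, one can avoid the bilinear characterization altogether and argue self-adjointly: set $v=S_f u$ in the chain above, use $\|S_f u\|^2=(S_f u, S_f u)$, and cancel one factor of $\|S_f u\|$; this gives $\|S_f u\|\le \delta_1^{-1}\bigl(\int|f|\bigr)\|u\|$ directly.)

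There is no genuine obstacle here; the statement is essentially a corollary of Lemma \ref{L::1}, and the only thing to be careful about is invoking the correct characterization of the operator norm and ensuring both test functions are estimated in the $C(I)$-norm so that the two factors of $\delta_1^{-1/2}$ combine into the single $\delta_1^{-1}$ appearing in the conclusion. The improvement over \cite[Lemma 8]{Dra} is encoded entirely in the sharper constant $\delta_1$ supplied by Lemma \ref{L::1}, so no new analytic ingredient is required beyond that estimate.
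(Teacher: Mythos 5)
Your proposal is correct and follows essentially the same argument as the paper: both use the bilinear characterization $\left\| S_f\right\| =\sup_{\left\| u\right\| =1}\sup_{\left\| v\right\| =1}\left| \int_a^b f(t)\,u(t)\,v(t)\,dt\right|$, pull out the $C(I)$-norms of $u$ and $v$, and apply the first inequality of Lemma \ref{L::1} twice so that the two factors of $\delta_1^{-1/2}$ combine into $\delta_1^{-1}$. The self-adjoint variant you mention in passing is a harmless alternative, but the main line of reasoning coincides with the paper's proof.
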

			
			\begin{proof}
				
				Using Lemma \ref{L::1} we can deduce the following inequalities which prove the result:
				\begin{eqnarray}
				\nonumber
				\left\| S_f\right\| &=&\sup_{\left\| u\right\| =1}\left\| S_f\,u\right\| =\sup_{\left\| u\right\| =1}\sup_{\left\| v\right\| =1}\left| \int_a^bf(t)\,u(t)\,v(t)\,dt\right| \\\nonumber
				&\leq& \sup_{\left\| u\right\| =1}\sup_{\left\| v\right\| =1}\int_a^b|f(t)|\,|u(t)|\,|v(t)|\,dt\\\nonumber&\leq& \sup_{\left\| u\right\| =1}\left\| u\right\| _{C( I)} \sup_{\left\| v\right\| =1}\left\| v\right\| _{C( I)}\int_a^b|f(t)|\,dt\\\nonumber
				&\leq& \dfrac{1}{\delta_1}\int_a^b|f(t)|\,dt\,.
				\end{eqnarray}
				
			\end{proof}
			
			Repeating the previous argument, we have
				\[\left\| S_f(u_n-u_m)\right\| \leq  \dfrac{1}{\sqrt{\delta_1}}\int_a^b\left| f(t)\right| \,dt \,\left\|u_n-u_m\right\|_{C(I)}\,. \]
			
			Thus, from the compact embedding of $H$ into $C(I)$, we can affirm that $S_f:H \to H$ is a compact operator.
			
			The proof of next result  is analogous to \cite[Lemma 9]{Dra}.
			
			\begin{lemma}\label{L::3}
				Let $h^*\in H$ previously defined in \eqref{Ec::Sf-h}. Then
				\[\left\| h^*\right\| \leq \sqrt{\dfrac{b-a}{\left( \dfrac{\pi}{b-a}\right) ^4+p\,\left( \dfrac{\pi}{b-a}\right) ^2+\min_{t\in I}{\{r(t)\}}}}\,\left\| h\right\|_{C( I)}\,.\]
			\end{lemma}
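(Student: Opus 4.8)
The plan is to mimic the proof of \cite[Lemma 9]{Dra}, exploiting the defining relation \eqref{Ec::Sf-h} for $h^*$ together with the second inequality of Lemma \ref{L::1}. The starting observation is that, by definition of $h^*$, we may test the identity $(h^*,v)=\int_a^b h(t)\,v(t)\,dt$ against the particular choice $v=h^*\in H$. This turns the squared norm $\left\| h^*\right\|^2=(h^*,h^*)$ into the concrete integral $\int_a^b h(t)\,h^*(t)\,dt$, which is now amenable to elementary estimates.

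The next step is a two-fold estimation of this integral. First I would bound $h(t)$ by its uniform norm, obtaining
\[
\left\| h^*\right\|^2=\int_a^b h(t)\,h^*(t)\,dt\leq \left\| h\right\|_{C(I)}\int_a^b\left| h^*(t)\right|\,dt\,.
\]
Then a single application of the Cauchy--Schwarz inequality gives $\int_a^b\left| h^*(t)\right|\,dt\leq \sqrt{b-a}\,\left\| h^*\right\|_{L^2(I)}$, so that
\[
\left\| h^*\right\|^2\leq \left\| h\right\|_{C(I)}\,\sqrt{b-a}\,\left\| h^*\right\|_{L^2(I)}\,.
\]
At this point the key ingredient is the $L^2$-estimate already established in Lemma \ref{L::1}, applied to the function $h^*$ itself: it yields
\[
\left\| h^*\right\|_{L^2(I)}\leq \dfrac{\left\| h^*\right\|}{\sqrt{\left(\frac{\pi}{b-a}\right)^4+p\,\left(\frac{\pi}{b-a}\right)^2+\min_{t\in I}\{r(t)\}}}\,.
\]

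Substituting this into the previous display and then cancelling one factor of $\left\| h^*\right\|$ (the case $h^*=0$ being trivial, since then both sides vanish) produces exactly the asserted bound. I do not expect any genuine obstacle here: the result is a standard Riesz-type norm estimate, and all the analytic work has been front-loaded into Lemma \ref{L::1}. The only points demanding a little care are the legitimacy of testing against $v=h^*$ (immediate, since $h^*\in H$ by its very construction in \eqref{Ec::Sf-h}) and the separate treatment of the degenerate case $\left\| h^*\right\|=0$ before dividing.
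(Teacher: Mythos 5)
Your proof is correct and is essentially the same argument the paper relies on: the paper omits the details and simply declares the proof analogous to \cite[Lemma 9]{Dra}, which is precisely this computation (test the Riesz identity against $v=h^*$, bound $h$ by its sup norm, apply Cauchy--Schwarz, and invoke the $L^2$ estimate of Lemma \ref{L::1}). The cancellation of one factor of $\left\| h^*\right\|$, with the trivial case $h^*=0$ handled separately, then yields exactly the stated bound, since $\sqrt{b-a}\,\big/\sqrt{\left(\tfrac{\pi}{b-a}\right)^4+p\left(\tfrac{\pi}{b-a}\right)^2+\min_{t\in I}\{r(t)\}}$ coincides with the constant in the lemma.
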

			
%
		
	\section{Strongly inverse positive (negative) character of $T[p,c]$ in $\tilde X$.}

This section is devoted to prove maximum and anti-maximum principles for the problem \eqref{Ec::Op},\eqref{Ec::cfnh}. These results generalize those obtained in \cite{Dra, Dra1} for $p=0$ and the homogeneous boundary conditions. The proofs follow similar arguments to the ones given in such articles. We point out that on them there is no reference to spectral theory. 

First, we obtain the results for the homogeneous case and, once  we have done that, we extrapolate them for the non-homogeneous boundary conditions by means of Lemma \ref{L::14} and Theorem \ref{T::2.5}.

The first of them ensures the existence of a unique solution of the problem under certain hypothesis and gives sufficient conditions to ensure that the operator \eqref{Ec::T4} is strongly inverse positive in $X$.
	
	\begin{theorem}\label{T::2}
		Let $c\,,h\in C( I)$ be such that 
		\[\int_a^bc^-(t)\,dt<\delta_1\,,\]
		where $\delta_1$ has been defined in \eqref{Ec::D1}. Then Problem \eqref{Ec::Op},\eqref{Ec::cf} has a unique classical solution $u\in X$ and there exists $R>0$ (depending on $c$ and $p$) such that 
		\[\left\| u\right\| _{C( I)}\leq R\,\left\| h\right\|_{C( I)}\,.\]
		
		Moreover, if $c(t)\leq -\lambda_2^p$, for every $t\in I$, then $T[p,c]$ is strongly inverse positive in $ X$.
		
	\end{theorem}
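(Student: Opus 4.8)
The plan is to prove the two assertions separately: unique solvability with the a priori estimate, and then the strongly inverse positive character.

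For the first assertion I would work in the Hilbert space $H$ equipped with the scalar product \eqref{Ec::pr-es} associated to the nonnegative weight $r=c^+$, so that $c=c^+-c^-$. Rewriting the weak formulation \eqref{Ec::fvar} by means of the operators introduced in \eqref{Ec::Sf-h}, a function $u\in H$ is a weak solution exactly when $(u,v)-\int_a^bc^-(t)\,u(t)\,v(t)\,dt=\int_a^bh(t)\,v(t)\,dt$ for all $v\in H$, that is, when $(I-S_{c^-})\,u=h^*$ in $H$. By Lemma \ref{L::2} and the hypothesis, $\|S_{c^-}\|\le\frac{1}{\delta_1}\int_a^bc^-(t)\,dt<1$, so $I-S_{c^-}$ is boundedly invertible (Neumann series) and there is a unique weak solution $u=(I-S_{c^-})^{-1}h^*$. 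Proposition \ref{P::4.5} then upgrades it to a classical solution in $X$, and uniqueness in $X$ follows at once, since a nontrivial solution for $h\equiv0$ would contradict the invertibility of $I-S_{c^-}$. The a priori bound is obtained by chaining the available estimates: $\|u\|_{C(I)}\le\frac{1}{\sqrt{\delta_1}}\|u\|$ from Lemma \ref{L::1}, $\|u\|\le(1-\|S_{c^-}\|)^{-1}\|h^*\|$ from the Neumann estimate, and the bound on $\|h^*\|$ from Lemma \ref{L::3}; multiplying these gives $\|u\|_{C(I)}\le R\,\|h\|_{C(I)}$ with an explicit $R>0$ depending only on $p$, $b-a$ and $c$ (through $\int_a^bc^-$ and $\min c^+$).

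For the strongly inverse positive character, by Theorem \ref{T::14} it suffices to show that the Green's function $g_{p,c}$ is positive a.e. on $(a,b)\times(a,b)$ with $\frac{\partial}{\partial t}g_{p,c}(t,s)_{\mid t=a}>0$ and $\frac{\partial}{\partial t}g_{p,c}(t,s)_{\mid t=b}<0$. I would establish this by a homotopy in the potential: set $c_\tau:=c^+-\tau\,c^-$ for $\tau\in[0,1]$, interpolating between $c_0=c^+$ and $c_1=c$. For every $\tau$ one has $c_\tau^-=\tau\,c^-$, hence $\int_a^bc_\tau^-\le\int_a^bc^-<\delta_1$, so by the first part each $T[p,c_\tau]$ is uniquely solvable and its Green's function $g_{p,c_\tau}$ exists and depends continuously on $\tau$. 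Moreover $0\le c^+(t)\le-\lambda_2^p$ (because $c(t)\le-\lambda_2^p$ and $-\lambda_2^p>0$) and $c^+(t)>-\lambda_1^p$, so Corollary \ref{Cor::1} applies at $\tau=0$ and $T[p,c^+]$ is strongly inverse positive; this is the base point of the homotopy.

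It then remains to propagate the sign conditions from $\tau=0$ to $\tau=1$, and I would do so by showing that the set of $\tau\in[0,1]$ for which $T[p,c_\tau]$ is strongly inverse positive is both open and closed in $[0,1]$, hence equal to $[0,1]$ by connectedness. Openness follows from the continuity in $\tau$ of $g_{p,c_\tau}$ and of its boundary derivatives, together with the strict inequalities in Theorem \ref{T::14}. The delicate point, which I expect to be the main obstacle, is closedness: if strong inverse positivity held for every $\tau<\tau^*$ but degenerated at $\tau^*$, then $g_{p,c_{\tau^*}}$ would either develop an interior zero or have one of its boundary derivatives vanish for some $s$. Each such event forces a nontrivial solution of $T[p,c_{\tau^*}]\,u=0$ under one of the auxiliary boundary conditions defining $\lambda_2^p$ and $\lambda_3^p$ (the spaces $X_1$, $X_3$, $U$, $V$), and the constraint $c_\tau(t)\le-\lambda_2^p$ maintained along the whole homotopy is precisely what excludes the corresponding eigenvalue crossing, while the integral condition keeps the operator itself nonsingular. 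Making this exclusion rigorous—identifying exactly which auxiliary eigenvalue problem governs each type of degeneration and verifying that $c\le-\lambda_2^p$ forbids it—is the heart of the argument.
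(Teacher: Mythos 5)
Your proof of the first claim coincides with the paper's: the same decomposition $c=c^+-c^-$, the same fixed--point equation $u=S_{c^-}u+h^*$ in $H$ with the scalar product \eqref{Ec::pr-es} for $r=c^+$, the bound $\left\|S_{c^-}\right\|<1$ from Lemma \ref{L::2}, regularity from Proposition \ref{P::4.5}, and the estimate obtained by chaining Lemmas \ref{L::1} and \ref{L::3}. That part is fine.

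The ``moreover'' part, however, has a genuine gap, and it sits exactly where you flag it: the closedness step of your homotopy is not proved, and the mechanism you propose for it does not hold up against the paper's own framework. If strong inverse positivity degenerates at some $\tau^*$, the limit Green's function $g_{p,c_{\tau^*}}\geq 0$ fails strict positivity either through an interior zero $g_{p,c_{\tau^*}}(t_0,s_0)=0$ with $t_0\neq s_0$, or through a vanishing boundary derivative. Neither event produces a nontrivial element of the kernel of $T[p,c_{\tau^*}]$ in $X_1$, $X_3$, $U$ or $V$ on the whole interval $I$: for fixed $s_0$ the function $g_{p,c_{\tau^*}}(\cdot,s_0)$ solves the equation only on $[a,s_0)$ and $(s_0,b]$, so an interior double zero at $t_0<s_0$ yields a nontrivial solution on the \emph{subinterval} $[a,t_0]$ satisfying $u(a)=u''(a)=u(t_0)=u'(t_0)=0$. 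These are $(2,2)$ conditions of $V$-type, i.e. of the family governing inverse \emph{negativity} ($\lambda_3^p$), not the $X_1$, $X_3$ problems behind $\lambda_2^p$; since Theorem \ref{T::2} imposes no pointwise lower bound on $c$, the maintained inequality $c_\tau\leq-\lambda_2^p$ cannot ``precisely exclude'' them. Turning such subinterval degenerations into eigenvalue information requires monotonicity of eigenvalues with respect to the interval and comparison results for non-constant potentials (the boundary conditions of $X_1$, $X_3$, $U$, $V$ are not self-adjoint, so Rayleigh-quotient comparisons are unavailable); this is the machinery of \cite{CaSaa2}, which the present paper deliberately avoids. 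Openness is also not immediate: $g_{p,c_\tau}$ vanishes on the boundary of the square, so uniform closeness of Green's functions does not preserve strict positivity near the edges and corners without quantitative lower bounds there.

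The paper closes this part by appealing to \cite[Theorem 4]{Dra}, whose argument is a monotone iteration for which you already hold every ingredient; it replaces your continuation entirely. Since $0\leq c^+(t)\leq-\lambda_2^p$, Corollary \ref{Cor::1} makes $T[p,c^+]$ strongly inverse positive (this is your base point $\tau=0$). Take $u\in X$ with $T[p,c]\,u=h\gneqq 0$; by the first part, $u$ is the unique solution and is the limit of the contraction iterates $u_{n+1}=S_{c^-}u_n+h^*$, $u_0\equiv 0$, which classically read $T[p,c^+]\,u_{n+1}=c^-\,u_n+h$ (Proposition \ref{P::4.5}). Then $T[p,c^+]\,u_1=h\gneqq0$ gives $u_1>0$ on $(a,b)$, and inductively $T[p,c^+]\,(u_{n+1}-u_n)=c^-\,(u_n-u_{n-1})\geq 0$ shows the sequence is nondecreasing; since convergence is uniform by Lemma \ref{L::1}, $u=\lim u_n\geq u_1>0$ on $(a,b)$. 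Finally $T[p,c^+]\,u=c^-\,u+h\gneqq 0$ and the strong inverse positivity of $T[p,c^+]$ give $u>0$ on $(a,b)$, $u'(a)>0$ and $u'(b)<0$, which is precisely the strongly inverse positive character of $T[p,c]$ in $X$. This is the same pattern used later in the paper in Theorem \ref{T::4}, and it needs no spectral comparison beyond Corollary \ref{Cor::1}.
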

	\begin{proof}

			First, we decompose $c(t)=c^+(t)-c^-(t)$. And, we write the problem \eqref{Ec::Op},\eqref{Ec::cf} as follows
			\begin{eqnarray}
			\nonumber u^{(4)}(t)-p\,u''(t)+c^+(t)\,u(t)&=&c^-(t)\,u(t)+h(t)\,,\quad t\in I,\\\nonumber
			u(a)=u(b)=u''(a)=u''(b)&=&0\,.
			\end{eqnarray} 
			
			If we denote $r(t):=c^+(t)$ and $f(t):=c^-(t)$, we have that the weak formulation of problem \eqref{Ec::Op} is given in the following way
			\begin{equation}\label{E::1}
			u=S_{c^-}\,u+h^*\,,\quad u\in H\end{equation}
			with the scalar product $(\cdot,\cdot)$ previously defined in \eqref{Ec::pr-es}.
			
			Using Lemma \ref{L::2} we have
			\begin{eqnarray}
			\nonumber \left\| S_{c^-}\right\| &\leq& \dfrac{1}{\delta_1}\,\int_a^b\left| c^-(t)\right| \,dt=\dfrac{1}{\delta_1}\,\int_a^b c^-(t) \,dt<\dfrac{1}{\delta_1}\,\delta_1=1\,.
			\end{eqnarray}
			
			Hence, $ S_{c^-}$ is a contractive operator and there exists a unique weak solution $u\in H$. From Proposition \ref{P::4.5}, $u\in X$ is a classical solution of \eqref{Ec::Op} in $X$.
			
			Now, using \eqref{E::1} we obtain:
			
			\[\left\| u\right\| =\left\| S_{c^-}\,u+h^*\right\| \leq \left\| S_{c^-}\right\| \,\left\| u\right\| +\left\| h^*\right\| \,,\]
			then
			\[\left\| u\right\| \leq\dfrac{1}{1-\left\| S_{c^-}\right\| }\,\left\| h^*\right\| \,.\]
			
			By another hand, using Lemmas \ref{L::1} and \ref{L::3}
			{\footnotesize 	\begin{eqnarray}\nonumber
				\left\| u\right\|_{C( I)}&\leq& \dfrac{1}{\sqrt{\delta_1}}\,\left\| u\right\| \leq \dfrac{1}{\sqrt{\delta_1}(1-\left\| S_{c^-}\right\| )}\,\left\| h^*\right\|\\\nonumber
				&\leq&\dfrac{\sqrt{b-a}}{\sqrt{\delta_1}(1-\left\| S_{c^-}\right\| )\sqrt{\left( \dfrac{\pi}{b-a}\right)^4+p\,\left( \dfrac{\pi}{b-a}\right) ^2+
				\min_{t\in I}{\{c^+(t)\}}} }\,\left\| h\right\|_{C( I)}\\
				&=: & R\, \,\left\| h\right\|_{C( I)}.\end{eqnarray}}
			
			Moreover, from Lemma \ref{L::2}, we know that
			\begin{eqnarray}
			\nonumber
			R \le \dfrac{\sqrt{b-a}}{\dfrac{1}{\sqrt{\delta_1}}\left( \delta_1-\int_a^bc^-(t)\,dt\right) \sqrt{\left( \dfrac{\pi}{b-a}\right)^4+p\,\left( \dfrac{\pi}{b-a}\right) ^2+\min_{t\in I}{\{c^+(t)\}}} }\,.
			\end{eqnarray}

			The proof behind here is just the same as in the particular case of $p=0$, which is collected in \cite[Theorem 4]{Dra}.
	\end{proof}

Now, we introduce a result which also gives us sufficient conditions to ensure the existence of solution of our problem and, moreover, it warrants that the operator $T[p,c]$ is strongly inverse negative in $X$.
	\begin{theorem}\label{T::3}
		Let $c\,, h\in C( I)$ be such that $-16 \left( \frac{\pi}{b-a}\right)^4-4\,p\,\left( \frac{\pi}{b-a}\right)^2  <c_m<- \left( \frac{\pi}{b-a}\right)^4-p\,\left( \frac{\pi}{b-a}\right)^2 $ and
		\[\int_a^b\left( c(t)-c_m\right) \,dt<\delta_1\,\delta_2\,,\]
		where $\delta_1$ is defined on \eqref{Ec::D1} and {\footnotesize \[\delta_2=\min \left\lbrace -1-\dfrac{c_m}{ \left( \frac{\pi}{b-a}\right)^4+p\,\left( \frac{\pi}{b-a}\right)^2},1+\dfrac{c_m}{16 \left( \frac{\pi}{b-a}\right)^4+4p\,\left( \frac{\pi}{b-a}\right)^2 }\right\rbrace\,.\]}
		
		Then problem \eqref{Ec::Op},\eqref{Ec::cf}  has a unique classical solution on $X$ and there exists $R>0$ (depending on $c$ and $p$) such that
		\[\left\| u\right\|_{C( I)}\leq R\,\left\| h\right\| _{C( I)}\,.\]
		
		Moreover, if $c_m\geq -\lambda^p_3$, then $T[p,c]$ is strongly inverse negative in $X$.
	\end{theorem}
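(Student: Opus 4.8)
The plan is to follow the scheme of the proof of Theorem \ref{T::2}, but with two modifications dictated by the fact that we are now in the inverse \emph{negative} regime. First, instead of the splitting $c=c^{+}-c^{-}$ I would use the shift $c(t)=c_m+\tilde c(t)$, with $\tilde c:=c-c_m\geq 0$; note that $(c_m+\tau\tilde c)_m=c_m$ for every $\tau$, so $c_m$ is genuinely the relevant constant. Second, and crucially, the constant-coefficient part $T[p,c_m]$ is no longer positive definite: recalling that $T[p,0]$ has in $X$ the eigenvalues $\lambda_k=k^{4}\left(\frac{\pi}{b-a}\right)^{4}+k^{2}p\left(\frac{\pi}{b-a}\right)^{2}$ with $L^{2}(I)$-normalized eigenfunctions $e_k(t)=\sin\!\left(k\pi\frac{t-a}{b-a}\right)$, the hypothesis $-{\lambda_1^p}'<c_m<-\lambda_1^p$ says exactly that $\lambda_1+c_m<0$ while $\lambda_k+c_m>0$ for all $k\geq 2$. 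Hence $T[p,c_m]$ is invertible but has precisely one negative direction, and the positive-definite scalar product \eqref{Ec::pr-es} used in Theorem \ref{T::2} must be replaced by one adapted to this single sign change.

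For the existence, uniqueness and the bound $R$, I would introduce on $H$, for $u=\sum_k\alpha_k e_k$, the norm $\|u\|_{*}^{2}:=\sum_k|\lambda_k+c_m|\,\alpha_k^{2}$, together with the $L^{2}$-orthogonal sign flip $J=-P_1+(I-P_1)$, where $P_1$ projects onto $e_1$. A direct computation gives $a_{c_m}(u,v):=\int_a^b u''v''+p\int_a^b u'v'+c_m\int_a^b uv=(Ju,v)_{*}$, so the weak form of \eqref{Ec::Op},\eqref{Ec::cf} reads $Ju+\tilde S\,u=h^{\#}$, where $\tilde S$ and $h^{\#}$ are the $(\cdot,\cdot)_{*}$-Riesz representatives of $v\mapsto\int_a^b\tilde c\,u\,v$ and $v\mapsto\int_a^b h\,v$; applying the $\|\cdot\|_{*}$-isometry $J$ yields the fixed-point equation $u=Jh^{\#}-J\tilde S\,u$. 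The decisive estimate is the comparison $\|u\|_{*}^{2}=\sum_k|\lambda_k+c_m|\alpha_k^{2}\geq\delta_2\sum_k\lambda_k\alpha_k^{2}=\delta_2\,\|u\|^{2}$ with $\|\cdot\|$ the norm of Lemma \ref{L::1} for $r\equiv 0$: the negative mode contributes the ratio $|\lambda_1+c_m|/\lambda_1=-1-c_m/\lambda_1^p$, and every mode $k\geq 2$ contributes $(\lambda_k+c_m)/\lambda_k\geq 1+c_m/{\lambda_1^p}'$, which is precisely the minimum defining $\delta_2$. Combined with Lemma \ref{L::1} this gives $\|u\|_{C(I)}\leq(\delta_1\delta_2)^{-1/2}\|u\|_{*}$, whence, by the argument of Lemma \ref{L::2}, $\|J\tilde S\|_{*}=\|\tilde S\|_{*}\leq(\delta_1\delta_2)^{-1}\int_a^b(c-c_m)\,dt<1$. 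Thus $J\tilde S$ is a contraction, the fixed-point equation has a unique solution, Proposition \ref{P::4.5} promotes it to a classical solution in $X$, and estimating $\|h^{\#}\|_{*}$ exactly as in Lemma \ref{L::3} produces the constant $R$ with $\|u\|_{C(I)}\leq R\,\|h\|_{C(I)}$.

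For the strongly inverse negative character I would fix $h\gneqq 0$ and let $A:=T[p,c_m]^{-1}$ be the constant-coefficient solution operator. Since $-\lambda_3^p\leq c_m<-\lambda_1^p$, the second item of Corollary \ref{Cor::1} applies to the constant potential $c\equiv c_m$, so by Theorem \ref{T::14} the operator $A$ sends every $f\gneqq 0$ to a function that is $<0$ on $(a,b)$ with $u'(a)<0$, $u'(b)>0$; in particular $A$ maps nonnegative functions to nonpositive ones. Using the identifications $Ah=Jh^{\#}$ and $A(\tilde c\,w)=J\tilde S\,w$, the fixed-point equation becomes $u=Ah-A(\tilde c\,u)$, and its Picard iteration $u_0=0$, $u_{n+1}=Ah-A(\tilde c\,u_n)$ converges to $u$ in $\|\cdot\|_{*}$, hence in $C(I)$, because $\|J\tilde S\|_{*}<1$. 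This recursion preserves sign: if $u_n\leq 0$ then $\tilde c\,u_n\leq 0$ (as $\tilde c\geq 0$), so $A(\tilde c\,u_n)\geq 0$ and $u_{n+1}\leq Ah\leq 0$; starting from $u_0=0$ gives $u_n\leq 0$ for all $n$, so $u\leq 0$. Finally $u\leq 0$ forces $h-\tilde c\,u\geq h\gneqq 0$, and from $u=A(h-\tilde c\,u)$ the strict inverse negativity of $A$ delivers $u<0$ on $(a,b)$, $u'(a)<0$ and $u'(b)>0$; thus $T[p,c]$ is strongly inverse negative in $X$.

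I expect the genuine obstacle to be the construction and analysis of the indefinite variational framework of the second paragraph, which is the feature absent from Theorem \ref{T::2}: one must verify that $\|\cdot\|_{*}$ is an equivalent norm on $H$, that $a_{c_m}(\cdot,v)=(J\cdot,v)_{*}$ holds, and above all that the single negative eigenvalue produces the sharp threshold, so that the constant $\delta_1\delta_2$ — and in particular the exact form of $\delta_2$ — is forced rather than ad hoc. The role of the second hypothesis $c_m\geq-\lambda_3^p$ is then confined to guaranteeing, through Corollary \ref{Cor::1}, that the seed $A=T[p,c_m]^{-1}$ of the sign-preserving iteration is itself strongly inverse negative; once this is available, the integral smallness condition $\int_a^b(c-c_m)\,dt<\delta_1\delta_2$ substitutes for the pointwise upper bound $c(t)<-\lambda_1^p$ that Corollary \ref{Cor::1} would otherwise require, exactly as in the $p=0$ treatment of \cite{Dra,Dra1}.
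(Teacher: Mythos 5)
Your proposal is correct and follows essentially the same route as the paper: the same shift $c=c_m+(c-c_m)$, the same fixed-point/contraction argument with the constant $\delta_1\delta_2$ coming from the spectral gap (your $J$ and $\|\cdot\|_{*}$ construction is just an explicit, self-contained verification of the fact $\|(I+S_{c_m})^{-1}\|=1/\delta_2$ that the paper obtains by citing \cite{Sad}), and the same sign-preserving Picard iteration seeded by the strongly inverse negative constant-coefficient operator $T[p,c_m]$, which the paper outsources to \cite[Theorem 5]{Dra}. The only slip is cosmetic: the functions $e_k(t)=\sin\left(k\pi\frac{t-a}{b-a}\right)$ are orthogonal but not $L^2$-normalized as written, which affects nothing in the estimates.
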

	
	\begin{proof}
			We rewrite the problem \eqref{Ec::Op},\eqref{Ec::cf} in the following way
			\begin{eqnarray}
			u^{(4)}(t)-p\,u''(t)+c_m\,u(t)&=&(c_m-c(t))\,u(t)+h(t)\,,\quad t\in I\,,\\\nonumber
			u(a)=u(b)=u''(a)=u''(b)&=&0\,.
			\end{eqnarray}
			
			In this case, we consider $r(t)\equiv 0$ and we have that the weak formulation is equivalent to 
			\begin{equation}
			\label{Ec::fd} u+S_{c_m}\,u=S_{c_m-c}\,u+h^*\,.
			\end{equation}
			
			Since {\footnotesize $c_m\in\left( -16 \left( \frac{\pi}{b-a}\right)^4-4\,p\,\left( \frac{\pi}{b-a}\right)^2 ,- \left( \frac{\pi}{b-a}\right)^4-p\,\left( \frac{\pi}{b-a}\right)^2\right) $}, we have that $I+S_{c_m}$ is invertible in $H$ . Then we can write
			\begin{equation}\label{Ec:fd2}
			u=(I+S_{c_m})^{-1}\,(S_{c_m-c}\,u+h^*)\,.
			\end{equation}
			
			Moreover $\left\| (I+S_{c_m})^{-1}\right\| =\dfrac{1}{\delta_2}$, where $\delta_2$ is the distance from $-1$ to the spectrum of $S_{c_m}$, see \cite{Sad}, i.e. 
			{\footnotesize \[\delta_2=\min \left\lbrace -1-\dfrac{c_m}{ \left( \frac{\pi}{b-a}\right)^4+p\,\left( \frac{\pi}{b-a}\right)^2},1+\dfrac{c_m}{16 \left( \frac{\pi}{b-a}\right)^4+4\,p\,\left( \frac{\pi}{b-a}\right)^2 }\right\rbrace \,.\]}
			
			Since $\int_a^b(c(t)-c_m)\,dt<\delta_1\,\delta_2$ and using Lemma \ref{L::2}, we have
			\[\left\| S_{c_m-c}\right\| \leq \dfrac{1}{\delta_1}\,\int_a^b(c(t)-c_m)\,dt<\dfrac{\delta_1\,\delta_2}{\delta_1}=\delta_2\,,\]
			so,
			\[\left\| (I+S_{c_m})^{-1}\,S_{c_m-c}\right\| \leq \left\| (I+S_{c_m})^{-1}\right\| \,\left\| S_{c_m-c}\right\| <\dfrac{\delta_2}{\delta_2}=1\,.\]
			
			So, analogously to Theorem \ref{T::2}, we can use the contractive character of operator $(I+S_{c_m})^{-1}\,S_{c_m-c}$, to ensure that there exists a unique weak solution  of \eqref{Ec::Op},\eqref{Ec::cf} $u\in H$, from Proposition \ref{P::4.5}, it is also a classical solution $u\in X$.
			
			Now, using \eqref{Ec:fd2}, we have
			\[\left\| u\right\| \leq \left\| (I+S_{c_m})^{-1}\right\| \,\left\| S_{c_m-c}\,u+h\right\|=\dfrac{1}{\delta_2}\left\| S_{c_m-c}\right\| \,\left\| u\right\| +\dfrac{1}{\delta_2}\left\| h^*\right\|\,.\]
			
			As consequence, we deduce that
			\[\left\| u\right\| \leq \dfrac{\left\| h^*\right\| }{\delta_2-\left\| S_{c_m-c}\right\| }\,,\]
			so, combining this inequality with Lemmas \ref{L::1} and \ref{L::3}, we have
			
			\begin{eqnarray}\nonumber\left\| u\right\| _{C( I)}&\leq& \dfrac{1 }{\sqrt{\delta_1}}\left\| u\right\| \leq\dfrac{\left\| h^*\right\| }{\sqrt{\delta_1}(\delta_2-\left\| S_{c_m-c}\right\|) }\\\nonumber&&\\\nonumber
			&\leq& \dfrac{1 }{\sqrt{\delta_1}(\delta_2-\left\| S_{c_m-c}\right\|)}\sqrt{\dfrac{b-a}{\left( \dfrac{\pi}{b-a}\right) ^4+p\left( \dfrac{\pi}{b-a}\right) ^2}}\,\left\| h\right\| _{C( I)}\\
		&:=&	  R \, \,\left\| h\right\| _{C( I)}	\,.\end{eqnarray}
			
			Moreover 
			\begin{eqnarray}
			\nonumber 
			R \le  \dfrac{1 }{\frac{1}{\sqrt{\delta_1}}(\delta_1\delta_2-\int_a^b(c(t)-c_m)\,dt)}\sqrt{\dfrac{b-a}{\left( \dfrac{\pi}{b-a}\right) ^4+p\left( \dfrac{\pi}{b-a}\right) ^2}}\,.
			\end{eqnarray}
			
			The proof of the fact that  while $-\lambda^p_3\leq c_m\leq- \left( \frac{\pi}{b-a}\right)^4-\left( \frac{\pi}{b-a}\right)^2$, $T[p,c]$ is strongly inverse negative is equal to \cite[Theorem 5]{Dra}.
	\end{proof}
	
		From Lemma \ref{L::14} and Theorem \ref{T::2.5}, we can now state two results for the non-homogeneous case.
		
		\begin{theorem}\label{T::2nh}
			Let $c\,,h\in C( I)$ be such that 
			\[\int_a^bc^-(t)\,dt<\delta_1\,,\]
			where $\delta_1$ has been defined in \eqref{Ec::D1}. Then Problem \eqref{Ec::Op},\eqref{Ec::cfnh} has a unique classical solution $u\in \tilde X$.
			
			Moreover, if $c(t)\leq -\lambda_2^p$, for every $t\in I$, then $T[p,c]$ is strongly inverse positive in $ \tilde X$.
		\end{theorem}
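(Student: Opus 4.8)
The plan is to prove Theorem \ref{T::2nh} by reducing it entirely to the already-proved homogeneous result, Theorem \ref{T::2}, using the bridge provided by Lemma \ref{L::14} and Theorem \ref{T::2.5}. The statement splits into two assertions: existence and uniqueness of a classical solution in $\tilde X$ for the non-homogeneous boundary conditions \eqref{Ec::cfnh}, and the strongly inverse positive character in $\tilde X$ under the extra hypothesis $c(t)\leq -\lambda_2^p$. Both hypotheses here are \emph{identical} to those of Theorem \ref{T::2}, so the entire content of the proof is transfer of the conclusions from $X$ to $\tilde X$, not any fresh analysis.

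First I would invoke Theorem \ref{T::2}: the hypothesis $\int_a^b c^-(t)\,dt<\delta_1$ guarantees that problem \eqref{Ec::Op},\eqref{Ec::cf} has a unique classical solution in $X$, and in particular that the homogeneous problem (with $h\equiv 0$) has only the trivial solution. This last fact is precisely the hypothesis required to apply Lemma \ref{L::14}. I would then quote Lemma \ref{L::14} to conclude that problem \eqref{Ec::Op},\eqref{Ec::cfnh} has the unique solution
\[
u(t)=\int_a^b g_{p,c}(t,s)\,h(s)\,ds+d_1\,y_{p,c}^a(t)+d_2\,y_{p,c}^b(t),\quad t\in I,
\]
where $g_{p,c}$ is the Green's function of $T[p,c]$ in $X$ and $y_{p,c}^a$, $y_{p,c}^b$ solve \eqref{Ec::ya}, \eqref{Ec::yb}. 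This settles existence and uniqueness in $\tilde X$ immediately, since the formula produces a function satisfying the non-homogeneous conditions and any solution must coincide with it by linearity and the triviality of the kernel.

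For the second assertion, I would note that the hypothesis $c(t)\leq -\lambda_2^p$ is exactly what Theorem \ref{T::2} needs to conclude that $T[p,c]$ is strongly inverse positive in $X$. Having established this in $X$, I would then appeal to the equivalence proved in Theorem \ref{T::2.5}, whose first bullet states that $T[p,c]$ is strongly inverse positive in $\tilde X$ if, and only if, it is strongly inverse positive in $X$. Thus the strongly inverse positive character transfers automatically to $\tilde X$, completing the proof.

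The only point deserving care — and the nearest thing to an obstacle — is verifying that the applicability conditions line up exactly, namely that the hypothesis of Theorem \ref{T::2} on $c^-$ does secure uniqueness of the trivial homogeneous solution demanded by Lemma \ref{L::14}; but this is precisely what the contraction argument behind Theorem \ref{T::2} delivers, so no genuine difficulty arises. The whole proof is therefore a short assembly of three prior results rather than an independent argument, which is exactly the structure the authors announce when they write ``From Lemma \ref{L::14} and Theorem \ref{T::2.5}, we can now state two results for the non-homogeneous case.''
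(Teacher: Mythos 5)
Your proposal is correct and is exactly the argument the paper intends: the paper gives no separate proof of Theorem \ref{T::2nh}, stating only that it follows ``From Lemma \ref{L::14} and Theorem \ref{T::2.5}'' applied to Theorem \ref{T::2}, which is precisely your assembly (Theorem \ref{T::2} gives uniqueness for the homogeneous problem, hence Lemma \ref{L::14} yields existence and uniqueness for \eqref{Ec::cfnh}, and Theorem \ref{T::2.5} transfers the strongly inverse positive character from $X$ to $\tilde X$). Your write-up merely makes explicit the details the authors left implicit, so there is nothing to correct.
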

		
		\begin{theorem}\label{T::3nh}
			Let $c\,, h\in C( I)$ be such that $-16 \left( \frac{\pi}{b-a}\right)^4-4\,p\,\left( \frac{\pi}{b-a}\right)^2  <c_m<- \left( \frac{\pi}{b-a}\right)^4-p\,\left( \frac{\pi}{b-a}\right)^2 $ and
			\[\int_a^b\left( c(t)-c_m\right) \,dt<\delta_1\,\delta_2\,,\]
			where $\delta_1$ is defined on \eqref{Ec::D1} and {\footnotesize \[\delta_2=\min \left\lbrace -1-\dfrac{c_m}{ \left( \frac{\pi}{b-a}\right)^4+p\,\left( \frac{\pi}{b-a}\right)^2},1+\dfrac{c_m}{16 \left( \frac{\pi}{b-a}\right)^4+4p\,\left( \frac{\pi}{b-a}\right)^2 }\right\rbrace\,.\]}
			
			Then problem \eqref{Ec::Op},\eqref{Ec::cfnh}  has a unique classical solution on $\tilde X$.
			
			Moreover, if $c_m\geq -\lambda^p_3$, then $T[p,c]$ is strongly inverse negative in $ \tilde X$.
		\end{theorem}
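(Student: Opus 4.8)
The plan is to reduce the statement entirely to its homogeneous counterpart, Theorem \ref{T::3}, and then transport both conclusions from $X$ to $\tilde X$ by means of Lemma \ref{L::14} and Theorem \ref{T::2.5}. The decisive observation is that the hypotheses on $c$, $h$, $c_m$, $\delta_1$ and $\delta_2$ imposed here coincide exactly with those of Theorem \ref{T::3}; consequently all of the analytic work (the contraction argument, the $a\,priori$ bound, and the identification of the strongly inverse negative regime) has already been carried out, and what remains is to assemble the pieces with no new estimates.

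First, for the existence and uniqueness in $\tilde X$, I would invoke Theorem \ref{T::3} to conclude that problem \eqref{Ec::Op},\eqref{Ec::cf} possesses a unique classical solution in $X$. In particular, choosing $h\equiv 0$, the only solution of the homogeneous problem is the trivial one, so the hypothesis of Lemma \ref{L::14} is satisfied. Applying that lemma then produces a solution of \eqref{Ec::Op},\eqref{Ec::cfnh} given explicitly by \eqref{Ec::sol}, namely a combination of $g_{p,c}$, $y_{p,c}^a$ and $y_{p,c}^b$; since $d_1,d_2\leq 0$ and these functions are $C^4(I)$ with the boundary behaviour prescribed in \eqref{Ec::ya} and \eqref{Ec::yb}, the resulting $u$ meets $u(a)=u(b)=0$, $u''(a)=d_1\leq 0$ and $u''(b)=d_2\leq 0$, hence lies in $\tilde X$, and Lemma \ref{L::14} guarantees it is the unique such solution.

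For the strongly inverse negative character, I would appeal once more to Theorem \ref{T::3}: under the additional assumption $c_m\geq -\lambda_3^p$, that result yields that $T[p,c]$ is strongly inverse negative in $X$. Theorem \ref{T::2.5} then transfers this property verbatim to $\tilde X$, which closes the argument.

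The only place where genuine care is needed, and thus the closest thing to an obstacle, is the bookkeeping that ensures the hypotheses line up so that Theorem \ref{T::3} and Lemma \ref{L::14} apply without any gap. Concretely, one must confirm that the condition $-16\left(\frac{\pi}{b-a}\right)^4-4\,p\left(\frac{\pi}{b-a}\right)^2<c_m<-\left(\frac{\pi}{b-a}\right)^4-p\left(\frac{\pi}{b-a}\right)^2$ together with the integral bound is precisely what guarantees the invertibility of $I+S_{c_m}$ and the well-definedness of $\lambda_3^p$ used in Theorem \ref{T::3}, and that the solution furnished by \eqref{Ec::sol} genuinely lands in $\tilde X$ rather than merely in $C^4(I)$, by checking the sign conditions on $u''(a)$ and $u''(b)$. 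Beyond verifying this compatibility, the proof is a direct concatenation of the three earlier results.
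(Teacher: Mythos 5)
Your proposal is correct and follows exactly the route the paper takes: the paper states Theorem \ref{T::3nh} as an immediate consequence of Theorem \ref{T::3} (for existence, uniqueness, and the inverse negative character in $X$), Lemma \ref{L::14} (to pass to the non-homogeneous boundary conditions via the representation \eqref{Ec::sol}), and Theorem \ref{T::2.5} (to transfer the strongly inverse negative property from $X$ to $\tilde X$). Your additional bookkeeping — checking that $h\equiv 0$ forces the trivial solution so Lemma \ref{L::14} applies, and that the solution lands in $\tilde X$ because $d_1,d_2\leq 0$ — is exactly the (implicit) content of the paper's one-line justification.
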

		
		\section{Maximum and anti-maximum principles for problem \eqref{Ec::Op},\eqref{Ec::cf} with $h>0$}
	In this section, even though we are not able to ensure the strongly inverse positive character of the operator $T[p,c]$ on $X$, we can ensure that, under the hypothesis that $h(t)>0$ for every $t\in I$, then problem \eqref{Ec::Op},\eqref{Ec::cf} has a unique positive (resp. negative) solution in $X$. The proofs follow similar steps to the ones given in \cite{Dra1}.
	
	\begin{theorem}\label{T::4}
		Let $h\in C( I)$ be a function such that $0<h_m\leq h^m$. Let $c\in C ( I)$ be a function  that   satisfies one of the two following hypothesis:
		\begin{enumerate}
			\item[(1)] {\footnotesize  $- \left( \frac{\pi}{b-a}\right)^4-p\,\left( \frac{\pi}{b-a}\right)^2<c_m\leq 0$} and {\footnotesize $c^m\leq -\lambda_2^p+\frac{h_m}{h^m}\,\frac{2}{\pi}\,\left( \left( \frac{\pi}{b-a}\right)^4+p\,\left( \frac{\pi}{b-a}\right)^2+c_m\right) $. }
			\item[(2)]  $ \int_a^bc^-(t)\,dt<\delta_1$, with $\delta_1$ defined on Theorem \ref{T::2}, and
		{\scriptsize 	\[c^m\leq -\lambda_2^p+\frac{h_m}{h^m}\frac{1}{\sqrt{\delta_1}}\left( \delta_1-\int_a^bc^-(t)\,dt\right) \sqrt{ \left( \frac{\pi}{b-a}\right) ^4+p\left( \frac{\pi}{b-a}\right) ^2+\min_{t\in I}\left\lbrace c^+(t),-\lambda_2^p\right\rbrace}\,,\]}
		\end{enumerate}
		then  problem \eqref{Ec::Op},\eqref{Ec::cf} has a unique positive solution in $X$.
	\end{theorem}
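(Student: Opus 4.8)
The plan is to reduce positivity to the strongly inverse positive character of a truncated operator, for which the sign of the right-hand side is controlled by the a priori bound on the solution. First I would introduce the comparison coefficient $\tilde c(t):=\min\{c(t),-\lambda_2^p\}$ and set $q(t):=c(t)-\tilde c(t)=\max\{0,c(t)+\lambda_2^p\}\ge 0$, so that $\|q\|_{C( I)}=(c^m+\lambda_2^p)^+$. A short check shows $\tilde c(t)\le -\lambda_2^p$ on $I$, while $\tilde c^-=c^-$ and $\tilde c^+=\min\{c^+,-\lambda_2^p\}$. With this, $T[p,\tilde c]$ is strongly inverse positive in $X$: under hypothesis (1) because $-\lambda_1^p<c_m\le \tilde c\le -\lambda_2^p$, so Corollary \ref{Cor::1} applies; under hypothesis (2) because $\int_a^b \tilde c^-=\int_a^b c^-<\delta_1$ together with $\tilde c\le -\lambda_2^p$ puts us in the ``moreover'' part of Theorem \ref{T::2}. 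In particular the associated Green's function is positive, so a right-hand side that is $\gneqq 0$ produces a solution that is $>0$ on $(a,b)$.

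Next I would secure existence, uniqueness and, crucially, the a priori estimate. Under (1) the inequality $-\lambda_1^p<c_m\le 0$ guarantees that $c$ avoids every eigenvalue $-\lambda_k$, so Proposition \ref{P::Usm1} yields a unique $u\in X$ solving \eqref{Ec::Op},\eqref{Ec::cf} with $\|u\|_{C( I)}\le R\,\|h\|_{C( I)}$ and $R=\pi/\big(2(\lambda_1^p+c_m)\big)$ (the borderline $c_m=0$ falls into the $c^-\equiv 0$ branch of Theorem \ref{T::2}); under (2) Theorem \ref{T::2} itself provides the unique solution together with its explicit bound $R$. Since $h>0$ we have $\|h\|_{C( I)}=h^m$, hence $\|u\|_{C( I)}\le R\,h^m$.

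Finally I would prove positivity by rewriting the equation as $T[p,\tilde c]\,u=h-q\,u$ and showing that its right-hand side is $\gneqq 0$. For every $t\in I$, since $q\ge 0$, one has $q(t)\,u(t)\le \|q\|_{C( I)}\,\|u\|_{C( I)}$, and the threshold imposed on $c^m$ in each hypothesis is precisely $\frac{h_m}{h^m}\,\frac1R$ (the factor $\frac{2}{\pi}(\lambda_1^p+c_m)$ in (1) and the $\delta_1$-expression in (2), the latter featuring $\min\{c^+,-\lambda_2^p\}=\min\tilde c^+$ exactly because the relevant bound is the one attached to the comparison operator). Thus $\|q\|_{C( I)}\,\|u\|_{C( I)}\le \frac{h_m}{h^m}\,\frac1R\cdot R\,h^m=h_m\le h(t)$, so $h-q\,u\ge 0$; it cannot vanish identically, for otherwise $u\equiv 0$, contradicting $T[p,c]\,u=h>0$. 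Invoking the strongly inverse positive character of $T[p,\tilde c]$ then gives $u>0$ on $(a,b)$, and uniqueness of the solution makes it the unique positive solution.

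The hard part will be the bookkeeping in this last estimate: verifying that the numerical threshold placed on $c^m$ coincides with $\frac{h_m}{h^m}$ times the reciprocal of the a priori $C( I)$-bound, matching in particular the appearance of $\min\{c^+,-\lambda_2^p\}$ rather than $\min c^+$, which signals that the relevant estimate must be read off the truncated operator $T[p,\tilde c]$. A second point to dispose of is the degenerate situation $c^m\le -\lambda_2^p$, where $q\equiv 0$, $T[p,c]=T[p,\tilde c]$ is already strongly inverse positive, and positivity is immediate.
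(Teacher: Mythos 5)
Your proposal is correct in substance, but it completes the proof by a genuinely different route than the paper. Both arguments start from the same truncation ($d(t)=\tilde c(t)=\min\{c(t),-\lambda_2^p\}$, strongly inverse positive by Corollary~\ref{Cor::1} under (1) and by Theorem~\ref{T::2} under (2)), and both reduce matters to the same numerical inequality, namely that $c^m+\lambda_2^p$ times a suitable $C(I)$ bound is at most $h_m$. The difference is \emph{what} gets bounded. The paper never estimates $u$ itself: it sets up the recurrence $T[p,d]\,u_{n+1}=h-(c-d)\,u_n$ with $u_0\equiv 0$, applies the a priori estimates to the first iterate $u_1$ (the solution of $T[p,d]\,u_1=h$, which is exactly why the constant in the statement carries $\min\{c^+(t),-\lambda_2^p\}=\min d^+(t)$), checks that $T[p,d]\,u_2\geq 0$, and then defers the convergence of the iteration and the positivity of its limit to \cite[Theorem 4]{Dra1}. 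You instead bound the actual solution $u$ (Proposition~\ref{P::Usm1} under (1), Theorem~\ref{T::2} under (2)) and conclude in one stroke: $T[p,\tilde c]\,u=h-q\,u\gneqq 0$, hence $u>0$ on $(a,b)$. Your version is shorter and self-contained --- no iteration, no contraction or monotonicity argument, no external reference --- and under (2) it even shows the hypothesis could be weakened to the threshold written with $\min_{t\in I} c^+(t)$ in place of $\min_{t\in I}\{c^+(t),-\lambda_2^p\}$, since the direct bound on $u$ involves the larger quantity $\min c^+$.

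Two pieces of bookkeeping in your sketch need repair. First, under (2) the threshold is \emph{not} ``precisely'' $\frac{h_m}{h^m}\frac{1}{R}$ with $R$ the constant bounding $u$: Theorem~\ref{T::2} bounds $u$ with the constant built from $\min_{t\in I} c^+(t)$ (and, read literally, with an extra factor $\sqrt{b-a}$ coming from Lemma~\ref{L::3}), whereas the threshold uses $\min_{t\in I}\{c^+(t),-\lambda_2^p\}$; your remark that the relevant estimate ``must be read off the truncated operator'' does not match your own argument, in which the estimated function solves the equation with coefficient $c$, not $\tilde c$. The chain of inequalities still closes, but because the constant bounding $u$ is less than or equal to the threshold constant, not equal to it --- state this as an inequality. (The missing $\sqrt{b-a}$ is an inconsistency internal to the paper: its own proof of this theorem drops it too, while Theorem~\ref{T::5} keeps the corresponding factor $1/\sqrt{b-a}$, so you inherit this issue on the same terms as the authors.) Second, your patch for the borderline case $c_m=0$ in (1) fails: routing through Theorem~\ref{T::2} produces the constant $\sqrt{b-a}\big/\bigl(\sqrt{\delta_1}\,\sqrt{\lambda_1^p+\min c^+}\bigr)$, which, for instance when $p=0$ and $b-a<1$, is strictly larger than the constant $\pi/(2\,\lambda_1^p)$ that your estimate needs in order to exploit threshold (1). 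The correct repair --- implicitly used by the paper as well, since it applies Proposition~\ref{P::Usm1} to $u_1$ with $d_m=c_m$ possibly equal to $0$ --- is to observe that the estimate of Proposition~\ref{P::Usm1} remains valid for $c_m\leq 0$: its proof uses only $c_m\leq 0$ and $\lambda_1^p+c_m>0$, not the strict inequality $c_m<0$ appearing in its statement.
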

\begin{proof}
	The existence of a unique solution  follows from Proposition \ref{P::Usm1} on the first case and from Theorem \ref{T::2} on the second one.
	
	Now, let us see that this solution $u$ is positive on $(a,b)$.
	
	Let us assume that $c^m>-\lambda_2^p$. If $c^m\leq -\lambda_2^p$,   we can apply Corollary \ref{Cor::1} or Theorem \ref{T::2}, respectively, to affirm that $T[p,c]$ is strongly inverse positive on $X$.
	
	Let $d(t):=\min\left\lbrace c(t),-\lambda^p_2\right\rbrace $ be a continuous function such that $c_m\leq d(t)\leq -\lambda_2^p$. We transform the equation \eqref{Ec::Op} in the following equivalent one:
	\[T[p,d]\,u(t)=h(t)-\left( c(t)-d(t)\right) \,u(t)\,,\]
	and we consider the next recurrence formula
	\[T[p,d]\,u_{n+1}=h(t)-\left( c(t)-d(t)\right) \,u_n\,,\quad n=0,1,2,\dots\]

	Since, $d(t)\leq -\lambda_2^p$ for every $t\in I$, $T[p,d]$ is a strongly inverse positive operator in $X$.
	
	We choose $u_0\equiv 0$ and we have $T[p,d]\,u_{1}=h(t)$. Since $T[p,d]$ is strongly inverse positive, $u_1>0$ in $(a,b)$, $u_1'(a)>0$ and $u_1'(b)<0$.
	
	Now, using that $u_1 \in X$ is the unique solution of $T[p,d]\,u(t)=h(t)$, we deduce that 
	
	\begin{itemize}
		\item If $c$ satisfies $(1)$, we can apply Proposition \ref{P::Usm1} to affirm	\[\left\| u_1\right\|_{C([0,1])}\leq \dfrac{\pi}{2\,\left( \left( \frac{\pi}{b-a}\right)  ^4+p\,\left( \frac{\pi}{b-a}\right)^2+c_m\right) }\,h^m\,.\]
		\item If $c$ satisfies $(2)$, we look at the proof of the Theorem \ref{T::2} to conclude that
	{\scriptsize 	\[\left\| u_1\right\| _{C( I)}\leq  \dfrac{h^m}{\dfrac{1}{\sqrt{\delta_1}}\left( \delta_1-\int_a^bc^-(t)\,dt\right) \sqrt{\left( \dfrac{\pi}{b-a}\right)^4+p\,\left( \dfrac{\pi}{b-a}\right) ^2+\min_{t\in I}\left\lbrace c^+(t),-\lambda_2^p\right\rbrace } }\,.\]}
		
	\end{itemize}
	
	Since $c(t)-d(t)\leq c^m+\lambda_2^p$, in both  cases, using the hypothesis, we have
{\small 	
	\[T[p,d]\,u_2=h(t)-\left( c(t)-d(t)\right) \,u_1\geq h_m-\left( c^m+\lambda_2^p\right) \,\left\| u_1\right\| _{C( I)}\geq h_m-\dfrac{h_m}{h^m}\dfrac{1}{R}\,h^m\,R=0\,,\]}
	where $R$ is defined by

$$R:= \dfrac{\pi}{2\,\left( \left( \frac{\pi}{b-a}\right)  ^4+\left( \frac{\pi}{b-a}\right)^2+c_m\right) }$$
 if $(1)$ holds, and 
$$R:=  \dfrac{1}{\frac{1}{\sqrt{\delta_1}}\left( \delta_1-\int_a^bc^-(t)\,dt\right)\sqrt{\left( \dfrac{\pi}{b-a}\right)^4+p\,\left( \dfrac{\pi}{b-a}\right) ^2+\min_{t\in I}\left\lbrace c^+(t),-\lambda_2^p\right\rbrace } },$$ when $(2)$ is fulfilled.

	From here, the proof is equal than in the case that $p=0$, see \cite[Theorem 4]{Dra1}
\end{proof}
\begin{theorem}\label{T::5}
	Let $h\in C( I)$ be a function such that $0<h_m\leq h^m$. Let $c\in C ( I)$ be a function that satisfies
		\[\int_a^b\left( c(t)-c_m\right) \,dt<\delta_1\,\delta_2\,,\]
		where $\delta_1$ and $\delta_2$ have been defined in Theorems \ref{T::2} and \ref{T::3}, respectively, and
	{\tiny	\[-\lambda_3^p-\frac{h_m}{h^m}\left( \dfrac{1}{\sqrt{\delta_1}}\left( \delta_1\,\delta_2-\int_a^b(c(t)-c_m)\,dt\right)\right) \sqrt{\frac{\left( \frac{\pi}{b-a}\right) ^4+p\left( \frac{\pi}{b-a}\right) ^2}{b-a}}\leq c_m< -\left( \frac{\pi}{b-a}\right) ^4-\left( \frac{\pi}{b-a}\right)^2\,, \]}
	then  problem \eqref{Ec::Op},\eqref{Ec::cf} has a unique negative solution in $X$.
	\end{theorem}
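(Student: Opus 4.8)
The plan is to reproduce the argument of Theorem~\ref{T::4}, interchanging the role of $\lambda_2^p$ with that of $\lambda_3^p$ and replacing the strongly inverse positive comparison by a strongly inverse negative one. Existence and uniqueness of a classical solution $u\in X$, together with an estimate $\|u\|_{C(I)}\le R\,\|h\|_{C(I)}$ for the explicit constant $R$ exhibited in the proof of Theorem~\ref{T::3}, follow directly from that theorem: the bound $\int_a^b(c(t)-c_m)\,dt<\delta_1\,\delta_2$ is assumed, and the two-sided restriction on $c_m$ keeps it inside the admissible range $\left(-{\lambda_1^p}',-\lambda_1^p\right)$. It then remains only to show that this unique solution is negative on $(a,b)$.

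I would argue according to the location of $c_m$. If $c_m\ge-\lambda_3^p$, all hypotheses of Theorem~\ref{T::3} are met, hence $T[p,c]$ is strongly inverse negative in $X$; since $h>0$ gives $T[p,c]\,u=h\gneqq0$, we obtain $u<0$ on $(a,b)$ at once. The substantive case is $c_m<-\lambda_3^p$. Here I would introduce the truncated coefficient $d(t):=\max\{c(t),-\lambda_3^p\}$, a continuous function with $d_m=-\lambda_3^p\le d(t)$, $d(t)\ge c(t)$ and $0\le d(t)-c(t)\le-\lambda_3^p-c_m$. Because $d_m=-\lambda_3^p$ lies in $\left(-{\lambda_1^p}',-\lambda_1^p\right)$ and $\int_a^b(d(t)-d_m)\,dt\le\int_a^b(c(t)-c_m)\,dt<\delta_1\,\delta_2$, Theorem~\ref{T::3} applies to the coefficient $d$ and guarantees that $T[p,d]$ is strongly inverse negative in $X$.

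With this at hand I would rewrite \eqref{Ec::Op} as $T[p,d]\,u=h+(d-c)\,u$ and set up the recurrence $T[p,d]\,u_{n+1}=h+(d(t)-c(t))\,u_n$, $u_0\equiv0$. Then $T[p,d]\,u_1=h\gneqq0$ forces $u_1<0$ on $(a,b)$, while the estimate of Theorem~\ref{T::3} gives $\|u_1\|_{C(I)}\le R\,h^m$. The decisive point is the chain of inequalities
\[T[p,d]\,u_2=h+(d-c)\,u_1\ge h_m+(c_m+\lambda_3^p)\,\|u_1\|_{C(I)}\ge h_m+(c_m+\lambda_3^p)\,R\,h^m\ge0\,,\]
in which the first bound uses $0\le d-c\le-\lambda_3^p-c_m$ together with $-\|u_1\|_{C(I)}\le u_1<0$, and the last inequality reduces, after inserting the lower bound $\tfrac{1}{R}\ge\tfrac{1}{\sqrt{\delta_1}}\bigl(\delta_1\,\delta_2-\int_a^b(c-c_m)\,dt\bigr)\sqrt{\lambda_1^p/(b-a)}$ for $1/R$ that follows from Theorem~\ref{T::3}, exactly to the lower bound imposed on $c_m$ in the statement. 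Thus $T[p,d]\,u_2\gneqq0$ and strong inverse negativity yields $u_2<0$; iterating, every $u_n$ is negative, the map $v\mapsto T[p,d]^{-1}(d-c)\,v$ is a contraction so that $u_n\to u$ with $u\le0$, and a final application of the strongly inverse negative character of $T[p,d]$ to the identity $T[p,d]\,u=h+(d-c)\,u\gneqq0$ upgrades this to $u<0$ on $(a,b)$. These monotone-iteration details are identical to those of \cite[Theorem~5]{Dra1}.

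The step I expect to require the most care is the bookkeeping that makes the decisive inequality close exactly against the stated lower bound for $c_m$: one must confirm both that the truncated operator $T[p,d]$ genuinely satisfies the spectral hypotheses of Theorem~\ref{T::3} at $d_m=-\lambda_3^p$ (in particular that $-{\lambda_1^p}'<-\lambda_3^p<-\lambda_1^p$ and that the total-variation bound survives the truncation) and that the constant $R$ controlling $\|u_1\|_{C(I)}$ is controlled by the reciprocal of the factor multiplying $h_m/h^m$ in the hypothesis. Everything beyond this calibration is a verbatim transcription of the $p=0$ argument in \cite{Dra1}.
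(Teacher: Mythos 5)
Your proposal is correct and is essentially the paper's own proof: the same case split on whether $c_m\geq-\lambda_3^p$, the same truncated coefficient $\max\{c(t),-\lambda_3^p\}$ (denoted $e$ in the paper, $d$ by you), the same iteration started from $u_0\equiv 0$, the same key inequality $h_m+(c_m+\lambda_3^p)\,R\,h^m\geq 0$ calibrated so that $1/R$ is exactly the factor multiplying $h_m/h^m$ in the hypothesis, and the same final appeal to \cite[Theorem 5]{Dra1} for the monotone-iteration tail. The one delicate point you flag---that invoking Theorem \ref{T::3} for the truncated coefficient (and using its estimate for $\|u_1\|_{C(I)}$ with the constant built from $c_m$ and $\int_a^b(c(t)-c_m)\,dt$) requires the $\delta_2$ computed at the truncated minimum $-\lambda_3^p$ to dominate the $\delta_2$ of the hypothesis computed at $c_m$---is passed over silently in the paper as well, so it is a shared omission rather than a defect of your argument relative to the paper's.
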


\begin{proof}
	The existence of a unique solution, $u\in X$, is given by Theorem \ref{T::3}.
	
	To see that $u<0$ in $(a,b)$, we assume that $c_m<-\lambda_3^p$. On the contrary, if $c_m\geq -\lambda_3^p$, using Theorem \ref{T::3} we know that $T[p,c]$ is strongly inverse negative and the result follows directly.
	
	Let $e(t):=\max\left\lbrace c(t),-\lambda_3^p\right\rbrace $ be a continuous function on $ I$ such that $-\lambda_3^p\leq e(t)\leq c^m$, and we write the equation \eqref{Ec::Op} in an equivalent form
	\[L[p,e]\,u=h(t)-\left( c(t)-e(t)\right) \,u\,,\]
	which, from Theorem \ref{T::3}, is an strongly inverse negative operator on $X$, and we consider the recurrence formula
	\[L[p,e]\,u_{n+1}=h(t)-\left( c(t)-e(t)\right) \,u_{n}\,,\quad n=0,1,2,\dots\]

As in the proof of Theorem \ref{T::4}, we choose $u_0\equiv 0$ and we have $T[p,e]\,u_1=h(t)>0$, then $u_1<0$ on $(a,b)$, $u_1'(a)<0$ and $u_1'(b)>0$.
	
	Since, $u_1$ is the unique solution of  problem  $T[p,e]\,u=h(t)$ in $X$, using the Theorem \ref{T::3} we have
	\[\left\| u_1\right\|_{C( I)}\leq \dfrac{h^m }{\dfrac{1}{\sqrt{\delta_1}}\left( \delta_1\,\delta_2-\int_a^b(c(t)-c_m)\,dt\right) }\sqrt{\dfrac{b-a}{\left( \dfrac{\pi}{b-a}\right) ^4+p\left( \dfrac{\pi}{b-a}\right) ^2}}\,.\]
	
	Fixing 
	\[R:=\dfrac{1 }{\dfrac{1}{\sqrt{\delta_1}}\left( \delta_1\,\delta_2-\int_a^b(c(t)-c_m)\,dt\right)}\sqrt{\dfrac{b-a}{\left( \dfrac{\pi}{b-a}\right) ^4+p\left( \dfrac{\pi}{b-a}\right) ^2}}\,,\]
	taking into account that $0\geq c(t)-e(t)\geq c_m+\lambda_3^p$ and that $u_1<0$, we arrive to
	{\small \[T[p,e]u_2=h(t)-\left( c(t)-e(t)\right) u_{1}\geq h_m-(c_m+\lambda_3^p){u_1}_m=h_m-(c_m+\lambda_3^p)\left\| u_1\right\|_{C( I)}\gneqq0\,.\]}
	
	The rest of the proof follows the same steps as \cite[Theorem 5]{Dra1}
\end{proof}	

\begin{remark}
	Realize that in this case we cannot extrapolate in a direct way the results to the set $\tilde X$. This is due to the fact that we are not able to ensure the existence and constant sign of functions $y_{p,c}^a$ and $x_{p,c}^a$ in this new situation.
	
	However in Theorem \ref{T::4} (1), since we do not reach any eigenvalue, we can apply Lemma \ref{L::14}. Moreover, from Proposition \ref{P::Usm1} we can deduce the boundedness needed and the result remains valid for $T[p,c]$ in $\tilde X$.
\end{remark}

\end{document}